\newtheorem{theorem}{Theorem}[section]
\theoremstyle{definition}
\newtheorem{example}[theorem]{Example}
\newcommand{\cA}{\mathcal A}
\newtheorem{theo}{Theorem}[section]
\newtheorem{lemm}[theo]{Lemma}
\newtheorem{prop}[theo]{Proposition}
\theoremstyle{definition}
\newtheorem{defi}[theo]{Definition}
\theoremstyle{definition}
\newtheorem{rem}[theo]{Remark}
\numberwithin{equation}{section}
\title{Biflat-like Banach algebras  }
\author{Sanaz Haddad sabzevar}
\address{Department of Mathematics, Central Tehran Branch, Islamic Azad University,Tehran, Iran, e-mail: {\tt san.haddadsabzevar.sci@iauctb.ac.ir}}
\author{Amin Mahmoodi}
\address{Department of Mathematics, Central Tehran Branch, Islamic Azad University, Tehran, Iran, e-mail: {\tt a\_mahmoodi@iauctb.ac.ir}}
\begin{document}
\pagestyle{headings}

%%%%%% Abstract
\begin{abstract}
Given a Banach algebra $ \mathcal{A} $ and a continuous homomorphism
$\sigma$ on it, the notion of $\sigma$-biflatness  for $ \mathcal{A}
$ is introduced. This is a generalization of biflatness and it is
shown that they are distinct. The relations between
$\sigma$-biflatness and some other close concepts such as
$\sigma$-biprojectivity and $\sigma$-amenability are studied. The
$\sigma$-biflatness of tensor product of Banach algebras are also
discussed.
\end{abstract}

\maketitle
Keywords: $\sigma$-amenable, $\sigma$-biflat, $\sigma$-virtual diagonal, $\sigma$-biprojective, $\sigma$-derivation.

%%%%%%% 2000 Mathematics Subject Clasification:
%%%%%%%% subjclass
MSC 2010: Primary: 46H25; Secondary: 16E40, 43A20.
%\setcounter{equation}{0}
%%%%%%%%%%%%
% % % % % % % % % % % % % % % % % % % % % % % %
% % % % % % % % % % % % % % % % % % % % % % % % %
% % % % % % % % % % % % % % % % % % % % % % % % %
\section{Introduction }
Biprojectivity and biflatness for Banach algebras, as introduced and
studied in the works of Helemskii (see for instance [5]), have
proved to be important and fertile notions. There are close
relationship between these notions and some other concepts of Banach
algebras such as amenability. It is known that every biprojective
Banach algebra with a bounded approximate identity is amenable and
in the presence of a bounded approximate identity, biflatness and
amenability are the same notions \cite{H}.

Before preceding further, we recall some preliminaries. Let  $
\mathcal{A} $ be a Banach algebra. Then its projective tensor
product $ \mathcal{A}\widehat{\otimes}{\mathcal{A}}$ is a Banach $
\mathcal{A} $-bimodule through $$  a\cdot(b\otimes c)=ab\otimes c \
\ \ \text{and} \ \ \
  (b\otimes c)\cdot a=b\otimes ca  \ \ \ (a,b,c\in \mathcal{A}).$$
For a Banach $ \mathcal{A} $-bimodule $X$, the dual space $X^*$
becomes a Banach $ \mathcal{A} $-bimodule in a natural manner. Let
$X$ and $Y$ be Banach $ \mathcal{A} $-bimodules. A bounded linear
map $T : X \longrightarrow Y$ is an $ \mathcal{A} $-\textit{bimodule
homomorphism} if $ T(a\cdot x)=a \cdot T(x) $ and $ T(x\cdot a)=
T(x)\cdot a $ , for $a \in \cA$, $x \in X$. It is obvious that the
\textit{diagonal operator} $
\pi_{\mathcal{A}}:\mathcal{A}\widehat{\otimes}\mathcal{A}
\rightarrow \mathcal{A}$ given by  $ \pi( a\otimes b)  = ab  $, is
an $ \mathcal{A} $-bimodule homomorphism. If it is clear to which
algebra $ \mathcal{A} $ we refer, we simply write $\pi$.

A Banach algebra $ \mathcal{A} $ is \textit{biprojective} if $ \pi$
has a right inverse which is an $\mathcal{A}$-bimodule homomorphism.
If there is an $\mathcal{A}$-bimodule homomorphism which is a left
inverse of $\pi^*$, then we say that $ \mathcal{A} $ is
\textit{biflat}.

Let $\mathcal{A}$ be a Banach algebra. We write $Hom(\mathcal{A})$
for the set of all continuous homomorphisms on $ \mathcal{A} $. Let
$X$ and $Y$ be Banach $\mathcal{A}$-bimodule, and let $\sigma\in
Hom(\mathcal{A})$. A bounded linear map $ T:X \rightarrow Y $ is a
$\sigma$-$ \mathcal{A} $-\textit{bimodule homomorphism} if $
T(a\cdot x)=\sigma (a) \cdot T(x) $ and $ T(x\cdot a)=
T(x)\cdot\sigma (a) $ where $ x\in X$ and $a\in\mathcal{A} $. A
Banach algebra $ \mathcal{A} $ is $\sigma$-\textit{biprojective} if
there exists a $ \sigma $-$ \mathcal{A} $-bimodule homomorphism $
\rho : \mathcal{A} \longrightarrow
\mathcal{A}\widehat{\otimes}\mathcal{A}$ such that $ \pi \circ \rho
= \sigma$ [13].

The purpose of this paper is to study the concept of
$\sigma$-biflatness for Banach algebras. We have to stress that our
definition is completely different from what have introduced in [4,
Definition 2.11]. For comparison, unlike our definition,
$\sigma$-biflatness in [4] is not a generalization of the notion of
biflatness [4, Remark 2.13].

The organization of the paper is as follows. Firstly, in section 2
we investigate some basic properties of $\sigma$-biflat Banach
algebras. We find an equivalent condition to $\sigma$-biflatness
(Theorem 2.3). We prove that every $\sigma$-biprojective Banach
algebra is $\sigma$-biflat (Proposition 2.4). However biflat Banach
algebras are $\sigma$-biflat (Proposition 2.5), we give an example
to show that the class of $\sigma$-biflat Banach algebras is larger
than that for biflat Banach algebras (Example 2.1).

In section 3, we find the relations between $\sigma$-biflatness and
both $\sigma$-amenability and the existence of some certain
$\sigma$-diagonals. There are examples of $\sigma$-biflat Banach
algebras which are not $\sigma$-amenable (Examples 3.1 and 3.2).

In section 4, we deal with the short exact sequence $ \Sigma : 0
\longrightarrow ker \pi \stackrel \imath \longrightarrow \mathcal{A}
\widehat{\otimes}\mathcal{A} \stackrel \pi \longrightarrow
\mathcal{A} \longrightarrow 0 $, where $ \mathcal{A}$ is a Banach
algebra. We prove that if $ \mathcal{A}$ is $\sigma$-amenable, then
$ \Sigma ^*$ and $ \Sigma ^{**}$ behave like splitting sequences
(Proposition 4.2).

Finally in section 5, we generalize Ramsden's theorem [11,
Proposition 2.5] related to biflatness of tensor product of Banach
algebras to the $\sigma$-case (Theorem 5.1).

% % % % % % % % % % % % % % % % % % % % % % % % % % % % % % % % %
% % % % % % % % % % % % % % % % % % % % % % % % % % % % % %
% % % % % % % % % % % % % % % % % % % % % % % % % % %
% % % % % % % % % % % % % % % % % % % % % % % % %

\section{Basic properties}
\begin{defi} \label{2.3}
Let $ \mathcal{A} $ be a Banach algebra and let $ \sigma\in
Hom(\mathcal{A}) $. Then $ \mathcal{A} $ is \textit{$ \sigma
$-biflat} if there exists a bounded linear map $ \rho : (
\mathcal{A}\widehat{\otimes}\mathcal{A})^* \longrightarrow
\mathcal{A}^* $ satisfying
$$\rho(\sigma(a)\cdot\lambda)=a\cdot\rho(\lambda) \ \ \ \text{and}
\ \ \ \rho(\lambda\cdot\sigma(a))=\rho(\lambda)\cdot a \ \ \ \ \ \ \
\ \ \ \ \ \ \ \ \ (1)$$ for $ a\in \mathcal{A} ,
\lambda\in\mathcal{A}^{\ast}$ such that $ \rho \circ \pi^* =
\sigma^*$.
\end{defi}
\begin{lemm}\label{2.2}
Let $ \mathcal{A} $ be a Banach algebra, let $ \sigma\in
Hom(\mathcal{A}) $ and let $ X $ and $ Y $ be Banach $ \mathcal{A}
$-bimodules. If $ T:X\rightarrow Y $ is a $ \sigma $-$ \mathcal{A}
$-bimodule homomorphism, then $ T^{\ast}$ satisfies $(1) $,
 that is, $ T^{\ast}(\sigma(a)\cdot\lambda)=a\cdot T^{\ast}(\lambda) $ and
  $T^{\ast}(\lambda\cdot\sigma(a))=T^{\ast}(\lambda)\cdot a$, for $a\in\mathcal{A},\lambda\in Y^{\ast}$.
\end{lemm}

\begin{proof}
For every $ x\in X $, $ a\in A $ and $ \lambda\in Y^{\ast} $, we have
\begin{align*}
\langle T^{\ast}(\sigma(a)\cdot\lambda),x\rangle &=\langle\sigma(a)\cdot\lambda,T(x)\rangle \\&=\langle\lambda,T(x)\cdot\sigma(a)\rangle=
\langle\lambda,T(x\cdot a)\rangle\\&=\langle T^{\ast}(\lambda),x\cdot a\rangle=\langle a\cdot T^{\ast}(\lambda),x\rangle,
\end{align*}
so $ T^{\ast}(\sigma(a)\cdot\lambda)=a\cdot T^{\ast}(\lambda) $. Similarly $ T^{\ast}(\lambda\cdot\sigma(a))=T^{\ast}(\lambda)\cdot a $.
\end{proof}

The following characterization is useful.
\begin{theo}\label{2.5}
Let $ \mathcal{A} $ be a Banach algebra and let $ \sigma\in
Hom(\mathcal{A}) $. Then, the following are equivalent:
 \begin{enumerate}
  \item[(i)] $ \mathcal{A} $ is $ \sigma $-biflat;
  \item[(ii)] There is a $ \sigma $-$ \mathcal{A} $-bimodule homomorphism
  $ \rho:\mathcal{A}\rightarrow(\mathcal{A}\widehat{\otimes}\mathcal{A})^{\ast\ast} $ such that $ \pi^{\ast\ast}\circ\rho=\kappa_{\mathcal{A}}\circ\sigma
  $, where $\kappa_{\mathcal{A}}$ is the canonical embedding of $ \mathcal{A} $ into $\mathcal{A}^{\ast\ast}$.
 \end{enumerate}

\end{theo}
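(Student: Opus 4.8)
The plan is to prove the two implications by a standard duality argument, moving between a map on $\mathcal{A}^*$ and its adjoint on $\mathcal{A}^{**}$. The key observation throughout is Lemma~\ref{2.2}: taking adjoints converts $\sigma$-$\mathcal{A}$-bimodule homomorphisms into maps satisfying the twisted module identities $(1)$, and one checks directly that the converse holds for second adjoints as well.

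For (i)$\Rightarrow$(ii), suppose $\rho:(\mathcal{A}\widehat{\otimes}\mathcal{A})^*\to\mathcal{A}^*$ satisfies $(1)$ and $\rho\circ\pi^*=\sigma^*$. First I would form the adjoint $\rho^*:\mathcal{A}^{**}\to(\mathcal{A}\widehat{\otimes}\mathcal{A})^{**}$ and then define $\tilde\rho := \rho^*\circ\kappa_{\mathcal{A}}:\mathcal{A}\to(\mathcal{A}\widehat{\otimes}\mathcal{A})^{**}$. The task is then twofold. First, verify that $\tilde\rho$ is a $\sigma$-$\mathcal{A}$-bimodule homomorphism: for $a,b\in\mathcal{A}$ and $\lambda\in(\mathcal{A}\widehat{\otimes}\mathcal{A})^*$, one computes $\langle\tilde\rho(a\cdot b),\lambda\rangle = \langle\kappa_{\mathcal{A}}(ab),\rho(\lambda)\rangle = \langle\rho(\lambda),ab\rangle$, and using that $(1)$ says $\rho$ intertwines the $\sigma$-twisted actions, this equals $\langle\rho(\sigma(a)\cdot\lambda),b\rangle = \langle\kappa_{\mathcal{A}}(b),\rho(\sigma(a)\cdot\lambda)\rangle = \langle\rho^*\kappa_{\mathcal{A}}(b),\sigma(a)\cdot\lambda\rangle = \langle\sigma(a)\cdot\tilde\rho(b),\lambda\rangle$; the right-hand identity is symmetric. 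Second, check $\pi^{**}\circ\tilde\rho=\kappa_{\mathcal{A}}\circ\sigma$: since $\pi^{**}=(\pi^*)^*$ and $\rho\circ\pi^*=\sigma^*$, we get $\pi^{**}\circ\rho^* = (\rho\circ\pi^*)^* = \sigma^{**}$, hence $\pi^{**}\circ\tilde\rho = \sigma^{**}\circ\kappa_{\mathcal{A}} = \kappa_{\mathcal{A}}\circ\sigma$, the last equality being the naturality of $\kappa$.

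For (ii)$\Rightarrow$(i), given a $\sigma$-$\mathcal{A}$-bimodule homomorphism $\rho:\mathcal{A}\to(\mathcal{A}\widehat{\otimes}\mathcal{A})^{**}$ with $\pi^{**}\circ\rho=\kappa_{\mathcal{A}}\circ\sigma$, I would consider $\rho^*:(\mathcal{A}\widehat{\otimes}\mathcal{A})^{***}\to\mathcal{A}^*$ and precompose with the canonical embedding $\kappa_{(\mathcal{A}\widehat{\otimes}\mathcal{A})^*}:(\mathcal{A}\widehat{\otimes}\mathcal{A})^*\to(\mathcal{A}\widehat{\otimes}\mathcal{A})^{***}$, setting $\bar\rho := \rho^*\circ\kappa_{(\mathcal{A}\widehat{\otimes}\mathcal{A})^*}:(\mathcal{A}\widehat{\otimes}\mathcal{A})^*\to\mathcal{A}^*$. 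By Lemma~\ref{2.2} applied to the $\sigma$-$\mathcal{A}$-bimodule homomorphism $\rho$, the adjoint $\rho^*$ satisfies $(1)$, and composing with the bimodule map $\kappa_{(\mathcal{A}\widehat{\otimes}\mathcal{A})^*}$ (which is an honest $\mathcal{A}$-bimodule homomorphism) preserves property $(1)$, so $\bar\rho$ satisfies $(1)$. For the relation $\bar\rho\circ\pi^*=\sigma^*$: one has $\rho^*\circ\pi^{***} = (\pi^{**}\circ\rho)^* = (\kappa_{\mathcal{A}}\circ\sigma)^* = \sigma^*\circ\kappa_{\mathcal{A}}^*$; combining this with the identity $\kappa_{\mathcal{A}}^*\circ\kappa_{\mathcal{A}^*} = \mathrm{id}_{\mathcal{A}^*}$ and the compatibility $\pi^{***}\circ\kappa_{(\mathcal{A}\widehat{\otimes}\mathcal{A})^*} = \kappa_{\mathcal{A}^*}\circ\pi^*$ yields $\bar\rho\circ\pi^* = \rho^*\circ\pi^{***}\circ\kappa_{(\mathcal{A}\widehat{\otimes}\mathcal{A})^*} = \sigma^*\circ\kappa_{\mathcal{A}}^*\circ\kappa_{\mathcal{A}^*}\circ\pi^* = \sigma^*\circ\pi^* $... wait, this needs care. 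The cleaner route is to use $\pi^{***}\circ\kappa_{(\mathcal{A}\widehat{\otimes}\mathcal{A})^*} = \kappa_{\mathcal{A}^*}\circ\pi^*$ directly: $\bar\rho\circ\pi^* = \rho^*\circ\kappa_{(\mathcal{A}\widehat{\otimes}\mathcal{A})^*}\circ\pi^*$, and here I would instead unwind definitions on elements, evaluating both sides against arbitrary $a\in\mathcal{A}$, reducing everything to $\langle\rho(a),\pi^*(\lambda)\rangle$ and using $\pi^{**}\rho(a) = \kappa_{\mathcal{A}}\sigma(a)$. This elementwise computation is where I expect the only real friction, since the naturality diagrams relating the various $\kappa$'s and adjoints must be chased carefully; once that bookkeeping is done, everything falls out.
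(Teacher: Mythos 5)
Your proposal is correct and follows essentially the same route as the paper: for (i)$\Rightarrow$(ii) take $\rho^*\circ\kappa_{\mathcal{A}}$ and verify the bimodule identities and $\pi^{**}\circ\tilde\rho=\kappa_{\mathcal{A}}\circ\sigma$, and for (ii)$\Rightarrow$(i) restrict $\rho^*$ to $(\mathcal{A}\widehat{\otimes}\mathcal{A})^*$, invoke Lemma~\ref{2.2}, and check $\bar\rho\circ\pi^*=\sigma^*$ by the elementwise computation $\langle\bar\rho\pi^*(\lambda),a\rangle=\langle\pi^{**}\rho(a),\lambda\rangle=\langle\kappa_{\mathcal{A}}\sigma(a),\lambda\rangle$, exactly as the paper does. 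The only blemish is a bookkeeping slip in your bimodule chain, where $\langle\rho(\lambda),ab\rangle$ should pass through $\rho(\lambda)\cdot a=\rho(\lambda\cdot\sigma(a))$ rather than $\rho(\sigma(a)\cdot\lambda)$ (two side-swaps that cancel, so the claimed endpoint $\langle\sigma(a)\cdot\tilde\rho(b),\lambda\rangle$ is still correct).
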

\begin{proof}
(i)$ \Longrightarrow $(ii) Since $ \mathcal{A} $ is $ \sigma $-biflat, there exists a bounded linear map
 $ \theta:(\mathcal{A}\widehat{\otimes}\mathcal{A})^{\ast}\rightarrow\mathcal{A}^{\ast} $ satisfying
 ($ 1 $) and $ \theta\circ\pi^{\ast}=\sigma^{\ast} $. We set $ \rho $ to be the restriction of $ \theta^{\ast} $ to $ \mathcal{A}
 $. Then, for every $ a\in\mathcal{A} $ and $ \lambda\in\mathcal{A}^{\ast} $ we
have
 \begin{align*}
 \langle\pi^{\ast\ast}\circ\rho(a),\lambda\rangle &=\langle\rho(a),\pi^{\ast}(\lambda)\rangle=\langle\theta^{\ast}(a),\pi^{\ast}(\lambda)\rangle \\
&=\langle a,\theta\circ\pi^{\ast}(\lambda)\rangle=\langle a,\sigma^{\ast}(\lambda)\rangle \\
&=\langle\sigma(a),\lambda\rangle=\langle\kappa_{\mathcal{A}}\circ\sigma(a),\lambda\rangle.
 \end{align*}
 Next, for every $ a,b\in\mathcal{A} $ and $ \xi\in(\mathcal{A}\otimes\mathcal{A})^{\ast} $
 \begin{align*}
 \langle \rho(ab),\xi\rangle&=\langle ab,\theta(\xi)\rangle=\langle b,\theta(\xi)\cdot a\rangle\\&=\langle b,\theta(\xi\cdot\sigma(a))\rangle=\langle\rho(b),\xi\cdot\sigma(a)\rangle=\langle\sigma(a)\cdot\rho(b),\xi\rangle,
 \end{align*}
 so $ \rho(ab)=\sigma(a)\cdot\rho(b) $. A Similar argument shows that $ \rho(ba)=\rho(b)\cdot\sigma(a) $.\\
(ii)$ \Longrightarrow $(i) Let $ \rho $ be as specified in the
clause (ii). Suppose that
 $ \tilde{\rho}:(\mathcal{A}\widehat{\otimes}\mathcal{A})^{\ast}\rightarrow\mathcal{A}^{\ast} $
 is the restriction of $ \rho^{\ast} $ into $
(\mathcal{A}\widehat{\otimes}\mathcal{A})^{\ast} $.
  Clearly $ \tilde{\rho} $ is a bounded linear map and satisfies  $ (1) $, by Lemma \ref{2.2}. For every $ a\in\mathcal{A} $ and $ \lambda\in\mathcal{A^{\ast}} $
 \begin{align*}
\langle\tilde{\rho}\circ\pi^{\ast}(\lambda),a\rangle &=\langle\pi^{\ast}(\lambda),\rho(a)\rangle=\langle\lambda,\pi^{\ast\ast}\circ\rho(a)\rangle \\
&=\langle\lambda,\kappa_{\mathcal{A}}\circ\sigma(a)\rangle=\langle\lambda,\sigma(a)\rangle=\langle\sigma^{\ast}(\lambda),a\rangle.
\end{align*}
showing that $ \tilde{\rho}\circ\pi^{\ast}=\sigma^{\ast} $.
\end{proof}

It is well-known that every biprojective Banach algebra is biflat.
The next proposition gives a generalization of this fact.
\begin{prop}\label{2.6}
Let $ \mathcal{A} $ be a Banach algebra and let $ \sigma\in Hom(\mathcal{A}) $. If $ \mathcal{A} $ is $ \sigma $-biprojective,
then $ \mathcal{A} $ is $ \sigma $-biflat.
\end{prop}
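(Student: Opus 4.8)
The plan is to produce, from the $\sigma$-biprojectivity data, an operator witnessing clause (ii) of Theorem~\ref{2.5}, and then invoke that theorem. By hypothesis there is a $\sigma$-$\mathcal{A}$-bimodule homomorphism $\varrho:\mathcal{A}\to\mathcal{A}\widehat{\otimes}\mathcal{A}$ with $\pi\circ\varrho=\sigma$. Composing with the canonical embedding $\kappa_{\mathcal{A}\widehat{\otimes}\mathcal{A}}:\mathcal{A}\widehat{\otimes}\mathcal{A}\to(\mathcal{A}\widehat{\otimes}\mathcal{A})^{**}$, set $\rho:=\kappa_{\mathcal{A}\widehat{\otimes}\mathcal{A}}\circ\varrho:\mathcal{A}\to(\mathcal{A}\widehat{\otimes}\mathcal{A})^{**}$. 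This is bounded, and I would check it is a $\sigma$-$\mathcal{A}$-bimodule homomorphism: $\varrho$ is one by assumption, and the canonical embedding of any Banach $\mathcal{A}$-bimodule into its bidual is always an $\mathcal{A}$-bimodule map, so the composite satisfies $\rho(a\cdot x)=\sigma(a)\cdot\rho(x)$ and $\rho(x\cdot a)=\rho(x)\cdot\sigma(a)$.

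The remaining point is the identity $\pi^{**}\circ\rho=\kappa_{\mathcal{A}}\circ\sigma$. This follows from naturality of the canonical embeddings: for any bounded linear $\pi:\mathcal{A}\widehat{\otimes}\mathcal{A}\to\mathcal{A}$ one has $\pi^{**}\circ\kappa_{\mathcal{A}\widehat{\otimes}\mathcal{A}}=\kappa_{\mathcal{A}}\circ\pi$. Hence $\pi^{**}\circ\rho=\pi^{**}\circ\kappa_{\mathcal{A}\widehat{\otimes}\mathcal{A}}\circ\varrho=\kappa_{\mathcal{A}}\circ\pi\circ\varrho=\kappa_{\mathcal{A}}\circ\sigma$, as required. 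If one prefers an explicit verification, for $a\in\mathcal{A}$ and $\lambda\in\mathcal{A}^{*}$ compute $\langle\pi^{**}(\rho(a)),\lambda\rangle=\langle\rho(a),\pi^{*}(\lambda)\rangle=\langle\varrho(a),\pi^{*}(\lambda)\rangle=\langle\pi(\varrho(a)),\lambda\rangle=\langle\sigma(a),\lambda\rangle$.

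With $\rho$ so constructed, clause (ii) of Theorem~\ref{2.5} holds, and therefore $\mathcal{A}$ is $\sigma$-biflat. I do not anticipate a genuine obstacle here: the argument is essentially the standard "biprojective $\Rightarrow$ biflat" passage, with the diagonal map replaced by $\sigma$ and bimodule homomorphisms replaced by $\sigma$-bimodule homomorphisms; the only thing to be careful about is that the naturality square for the canonical embedding is used with the correct modules ($\mathcal{A}\widehat{\otimes}\mathcal{A}$ on one side, $\mathcal{A}$ on the other), and that the $\sigma$-bimodule property genuinely transfers through $\kappa_{\mathcal{A}\widehat{\otimes}\mathcal{A}}$, which it does because the embedding is an honest $\mathcal{A}$-bimodule map.
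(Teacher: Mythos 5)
Your proposal is correct, but it reaches the conclusion by a slightly different route than the paper. The paper works directly from Definition~\ref{2.3}: it takes the biprojectivity map $\varrho:\mathcal{A}\to\mathcal{A}\widehat{\otimes}\mathcal{A}$ and uses its adjoint $\varrho^{*}:(\mathcal{A}\widehat{\otimes}\mathcal{A})^{*}\to\mathcal{A}^{*}$ as the witness, verifying $\varrho^{*}\circ\pi^{*}=\sigma^{*}$ by the same pairing computation you give, and obtaining property $(1)$ from Lemma~\ref{2.2}. You instead build the bidual-valued map $\kappa_{\mathcal{A}\widehat{\otimes}\mathcal{A}}\circ\varrho$ and invoke clause (ii) of Theorem~\ref{2.5}. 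The two arguments are essentially dual to one another: if you feed your $\rho$ into the (ii)$\Rightarrow$(i) direction of Theorem~\ref{2.5}, the resulting restriction of $\rho^{*}$ to $(\mathcal{A}\widehat{\otimes}\mathcal{A})^{*}$ is precisely $\varrho^{*}$, since $(\kappa_{X})^{*}$ restricted to $X^{*}$ is the identity. What your version buys is a cleaner conceptual picture (the naturality square $\pi^{**}\circ\kappa_{\mathcal{A}\widehat{\otimes}\mathcal{A}}=\kappa_{\mathcal{A}}\circ\pi$ does all the work, exactly as in the classical ``biprojective implies biflat'' argument); what the paper's version buys is one fewer layer of duals and no dependence on Theorem~\ref{2.5}, only on Lemma~\ref{2.2}. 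Both are complete; all the facts you rely on (the canonical embedding is a bimodule map, and composing a $\sigma$-bimodule homomorphism with a bimodule homomorphism yields a $\sigma$-bimodule homomorphism) are standard and correctly applied.
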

\begin{proof}
If $ \mathcal{A} $ is a $ \sigma $-biprojective Banach algebra, then there exists a bounded $ \sigma $-$ \mathcal{A} $-bimodule homomorphism
$ \rho:\mathcal{A}\rightarrow\mathcal{A}\widehat{\otimes}\mathcal{A} $ such that $ \pi\circ\rho=\sigma $.
For every $ \lambda\in\mathcal{A}^{\ast} $ and $ a\in\mathcal{A} $ we have
 \begin{equation*}
\langle\rho^{\ast}\circ\pi^{\ast}(\lambda),a\rangle=\langle\pi^{\ast}(\lambda),\rho(a)\rangle=\langle\lambda,\pi\circ\rho(a)\rangle=
\langle\lambda,\sigma(a)\rangle=\langle\sigma^{\ast}(\lambda),a\rangle.
 \end{equation*}
Since $ \rho $ is a $ \sigma $-$ \mathcal{A} $-bimodule
homomorphism, $ \rho^{\ast} $ satisfies $ (1) $ by Lemma \ref{2.2},
 so $ \mathcal{A} $ is $ \sigma $-biflat.
\end{proof}
The relation between biflatness and $ \sigma$-biflatness appears as
follows.
\begin{prop}\label{2.11}
Let $ \mathcal{A} $ be a Banach algebra and let $ \sigma\in
Hom(\mathcal{A}) $. Then:
\begin{enumerate}
\item[(i)] If $ \mathcal{A} $ is biflat, then $ \mathcal{A} $ is $ \sigma
$-biflat.
\item[(ii)] If $ \mathcal{A} $ is $ \sigma $-biflat and has a bounded
approximate identity, and if $ \sigma $ has a dense range, then $
\mathcal{A} $ is biflat.
\end{enumerate}
\end{prop}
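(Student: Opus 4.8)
\textbf{(i)} The plan is to post-compose the biflatness splitting with $\sigma^{*}$. Since $\mathcal{A}$ is biflat, fix an $\mathcal{A}$-bimodule homomorphism $\theta:(\mathcal{A}\widehat{\otimes}\mathcal{A})^{*}\to\mathcal{A}^{*}$ with $\theta\circ\pi^{*}=\mathrm{id}_{\mathcal{A}^{*}}$ and put $\rho:=\sigma^{*}\circ\theta$. Then $\rho$ is bounded and $\rho\circ\pi^{*}=\sigma^{*}\circ\theta\circ\pi^{*}=\sigma^{*}$, so the only point to verify is condition $(1)$. For $\nu\in\mathcal{A}^{*}$ and $a,b\in\mathcal{A}$, multiplicativity of $\sigma$ gives $\langle\sigma^{*}(\sigma(a)\cdot\nu),b\rangle=\langle\nu,\sigma(b)\sigma(a)\rangle=\langle\nu,\sigma(ba)\rangle=\langle a\cdot\sigma^{*}(\nu),b\rangle$, whence $\sigma^{*}(\sigma(a)\cdot\nu)=a\cdot\sigma^{*}(\nu)$, and symmetrically $\sigma^{*}(\nu\cdot\sigma(a))=\sigma^{*}(\nu)\cdot a$. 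Since $\theta$ is an $\mathcal{A}$-bimodule homomorphism, it follows for $\mu\in(\mathcal{A}\widehat{\otimes}\mathcal{A})^{*}$ that $\rho(\sigma(a)\cdot\mu)=\sigma^{*}(\sigma(a)\cdot\theta(\mu))=a\cdot\rho(\mu)$ and $\rho(\mu\cdot\sigma(a))=\rho(\mu)\cdot a$. Thus $\rho$ witnesses $\sigma$-biflatness; this part is routine and presents no difficulty.

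\textbf{(ii)} The plan is to build a genuine virtual diagonal for $\mathcal{A}$ out of the $\sigma$-biflatness data and the bounded approximate identity, and then to invoke the classical fact that, in the presence of a bounded approximate identity, biflatness and amenability coincide \cite{H}. By Theorem~\ref{2.5} there is a (bounded) $\sigma$-$\mathcal{A}$-bimodule homomorphism $\phi:\mathcal{A}\to(\mathcal{A}\widehat{\otimes}\mathcal{A})^{**}$ with $\pi^{**}\circ\phi=\kappa_{\mathcal{A}}\circ\sigma$. Let $(e_{\alpha})$ be the given bounded approximate identity. The net $(\phi(e_{\alpha}))$ is bounded in the dual Banach space $(\mathcal{A}\widehat{\otimes}\mathcal{A})^{**}$, so after passing to a subnet (which remains a bounded approximate identity) we may assume $\phi(e_{\alpha})\to M$ in the weak$^{*}$ topology for some $M\in(\mathcal{A}\widehat{\otimes}\mathcal{A})^{**}$.

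The key step is that, for each $a\in\mathcal{A}$, the $\sigma$-bimodule identities for $\phi$ give $\sigma(a)\cdot\phi(e_{\alpha})=\phi(ae_{\alpha})$ and $\phi(e_{\alpha})\cdot\sigma(a)=\phi(e_{\alpha}a)$; since $ae_{\alpha}\to a$ and $e_{\alpha}a\to a$ in norm, $\phi$ is norm-continuous, and left and right multiplication by $\sigma(a)$ are weak$^{*}$-continuous on $(\mathcal{A}\widehat{\otimes}\mathcal{A})^{**}$, passing to the limit gives $\sigma(a)\cdot M=M\cdot\sigma(a)=\phi(a)$. Applying the bimodule homomorphism $\pi^{**}$ and using $\pi^{**}\circ\phi=\kappa_{\mathcal{A}}\circ\sigma$ then gives $\sigma(a)\cdot\pi^{**}(M)=\pi^{**}(M)\cdot\sigma(a)=\kappa_{\mathcal{A}}(\sigma(a))$. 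Now the hypothesis that $\sigma$ has dense range is brought in to untwist these relations: the bounded linear maps $b\mapsto b\cdot M$ and $b\mapsto M\cdot b$ on $\mathcal{A}$ agree on the dense set $\sigma(\mathcal{A})$, hence $b\cdot M=M\cdot b$ for all $b\in\mathcal{A}$; likewise $b\mapsto b\cdot\pi^{**}(M)$, $b\mapsto\pi^{**}(M)\cdot b$ and $\kappa_{\mathcal{A}}$ agree on $\sigma(\mathcal{A})$, hence $b\cdot\pi^{**}(M)=\pi^{**}(M)\cdot b=\kappa_{\mathcal{A}}(b)$ for all $b\in\mathcal{A}$. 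Thus $M$ is a virtual diagonal for $\mathcal{A}$, so $\mathcal{A}$ is amenable (Johnson's theorem) and therefore biflat.

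I expect the only genuine obstacle to be the untwisting in the last paragraph. The $\sigma$-biflatness data only produces relations twisted by $\sigma$, and two features are needed to cope with them: the bounded approximate identity, which is what forces the weak$^{*}$-limit $M$ to satisfy $\sigma(a)\cdot M=M\cdot\sigma(a)=\phi(a)$ at all, and the density of $\sigma(\mathcal{A})$ together with the continuity of the module actions, which is exactly what converts the twisted relations into the untwisted virtual-diagonal identities. Beyond this the argument is bookkeeping with the module actions and the weak$^{*}$/Arens conventions.
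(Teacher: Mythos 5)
Your proof is correct and follows essentially the same route as the paper: part (i) is exactly the paper's construction $\rho=\sigma^{*}\circ\theta$, and part (ii), like the paper's Proposition~\ref{2.9} (which the paper simply cites here), builds a ($\sigma$-)diagonal from the bounded approximate identity, untwists it using the density of $\sigma(\mathcal{A})$, and concludes amenability and hence biflatness. The only cosmetic difference is that you untwist directly at the level of the virtual diagonal via weak$^{*}$-continuity of the module actions, whereas the paper first passes to a $\sigma$-approximate diagonal; your variant is, if anything, slightly cleaner.
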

\begin{proof}
(i) There exists a bounded $ \mathcal{A} $-bimodule homomorphism $
\rho:(\mathcal{A}\hat{\otimes}\mathcal{A})^{*}\longrightarrow\mathcal{A}^{\ast}
$ such that $ \rho\pi^{\ast}=i_{\mathcal{A}^{\ast}} $. Define $
\acute{\rho}=\sigma^{\ast}\rho:(\mathcal{A}\hat{\otimes}\mathcal{A})^{\ast}\longrightarrow\mathcal{A}^{\ast}
$. Then, for every $ \lambda\in
(\mathcal{A}\hat{\otimes}\mathcal{A})^{\ast} $ and $
a,b\in\mathcal{A} $, we have
\begin{align*}
\langle b,\acute{\rho}(\sigma(a)\cdot\lambda)\rangle&=\langle
b,\sigma^{\ast}\rho(\sigma(a)\cdot\lambda)\rangle=\langle
b,\sigma^{\ast}(\sigma(a)\cdot\rho(\lambda))\rangle\\&=\langle\sigma(b),\sigma(a)\cdot\rho(\lambda)\rangle=\langle\sigma(ba),\rho(\lambda)\rangle\\&=\langle
ba,\sigma^{\ast}\rho(\lambda)\rangle=\langle
b,a\cdot\acute{\rho}(\lambda)\rangle
\end{align*}
and analogously, $
\acute{\rho}(\lambda\cdot\sigma(a))=\acute{\rho}(\lambda)\cdot a $.
 It is clear that $ \acute{\rho}\pi^{\ast}=\sigma^{\ast} $, and hence $ \mathcal{A} $ is $ \sigma $-biflat.\\
(ii) By Proposition \ref{2.9} below, $ \mathcal{A} $ is amenable and
then by \cite[Theorem 2.9.65]{D}, $ \mathcal{A} $ is biflat.
\end{proof}
Now, we give a $ \sigma $-biflat Banach algebra which is not biflat.
\begin{example}
It is known that $ \mathcal{A}=l^{1}(\mathbb{Z}^{+}) $ is not
amenable, and so $ \mathcal{A}^{\sharp} $ is not amenable by
\cite[Corollary 2.3.11]{R}. Therefore, $ \mathcal{A}^{\sharp} $ is
not biflat by \cite{H}. We consider the homomorphism $
\sigma:\mathcal{A}^{\sharp}\rightarrow\mathcal{A}^{\sharp} $ defined
by $ \sigma(a+\lambda e)=\lambda $, $ (a\in \mathcal{A},\lambda\in
\mathbb{C}) $. An argument similar to \cite[Example 2.7]{G}, shows
that $ \mathcal{A}^{\sharp} $ is $ \sigma $-amenable. Then by Remark
\ref{2.10} below, $ \mathcal{A}^{\sharp} $ is $ \sigma $-biflat.
\end{example}

\section{Relation to  $ \sigma $-amenability}
Let $ \mathcal{A} $ be a Banach algebra, let $ \sigma \in
Hom(\mathcal{A})$, and let $X$ be a Banach $ \mathcal{A} $-bimodule.
A bounded linear map $ {D}:\mathcal{A}\rightarrow X $ is a
\textit{$\sigma$-derivation} if
$$ D(ab)=\sigma(a)\cdot D(b)+D(a)\cdot \sigma(b) \qquad  (a,b\in\mathcal{A}) ,
$$ and it is \textit{$\sigma$-inner
derivation} if there is an element $x\in X$ such that $
D(a)=\sigma(a)\cdot x-x\cdot \sigma(a) $ for all $ a\in\mathcal{A}
$.  A Banach algebra $ \mathcal{A} $ is \textit{$\sigma$-amenable}
if for every Banach $ \mathcal{A} $-bimodule $X$, every
$\sigma$-derivation $ {D}:\mathcal{A}\rightarrow X^* $ is
$\sigma$-inner \cite{M1,M2}. An element $
M\in(\mathcal{A}\widehat{\otimes}\mathcal{A})^{\ast\ast} $ is said
to be a \textit{$ \sigma $-virtual diagonal} for
 $ \mathcal{A} $, if $ \sigma(a)\cdot M=M\cdot\sigma(a) $ and $ \pi^{**}(\it M)\cdot\sigma(a)=\sigma(a) $ for all $ a\in\mathcal{A} $.
A bounded net $
(m_{\alpha})\subseteq\mathcal{A}\widehat{\otimes}\mathcal{A} $ is
said to be a \textit{$ \sigma $-approximate diagonal} for
 $ \mathcal{A} $ if $ \lim _{\alpha}m_{\alpha}\cdot\sigma(a)-\sigma(a)\cdot m_{\alpha}=0 $ and $ \lim_{\alpha}\pi(m_{\alpha})\cdot\sigma(a)=\sigma(a)
 $ for all $ a\in\mathcal{A} $ \cite {M}. In \cite[Proposition 2.4]{G} it is shown that if $ \mathcal{A} $ has
a $ \sigma $-virtual diagonal, then it has a $ \sigma $-approximate
diagonal. An easy verification shows that the converse is also true.

In the following two propositions, we establish a connection between
$ \sigma $-biflatness and existence of $ \sigma $-virtual diagonals.
\begin{prop}\label{2.7}
Let $ \mathcal{A} $ be a Banach algebra with a bounded approximate
identity and $ \sigma\in Hom(\mathcal{A}) $. If $ \mathcal{A} $ is $
\sigma $-biflat, then $ \mathcal{A} $ has a $ \sigma $-virtual
diagonal.
\end{prop}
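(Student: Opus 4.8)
The plan is to adapt the classical argument that a biflat Banach algebra with a bounded approximate identity admits a virtual diagonal, using the characterization of $\sigma$-biflatness from Theorem \ref{2.5}. By that theorem, there is a $\sigma$-$\mathcal{A}$-bimodule homomorphism $\rho:\mathcal{A}\rightarrow(\mathcal{A}\widehat{\otimes}\mathcal{A})^{\ast\ast}$ with $\pi^{\ast\ast}\circ\rho=\kappa_{\mathcal{A}}\circ\sigma$; in particular $\rho$ is bounded. Fix a bounded approximate identity $(e_{\alpha})$ for $\mathcal{A}$. Then $(\rho(e_{\alpha}))$ is a bounded net in $(\mathcal{A}\widehat{\otimes}\mathcal{A})^{\ast\ast}$, so by the Banach--Alaoglu theorem it has a subnet, which we keep denoting $(\rho(e_{\alpha}))$, converging in the weak$^{\ast}$ topology to some $M\in(\mathcal{A}\widehat{\otimes}\mathcal{A})^{\ast\ast}$. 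The claim is that this $M$ is a $\sigma$-virtual diagonal for $\mathcal{A}$.

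First I would check that $\sigma(a)\cdot M=M\cdot\sigma(a)$ for every $a\in\mathcal{A}$. Since $\rho$ is a $\sigma$-$\mathcal{A}$-bimodule homomorphism and $\mathcal{A}$ carries its natural bimodule structure given by multiplication, for every $\alpha$ one has
$$\sigma(a)\cdot\rho(e_{\alpha})-\rho(e_{\alpha})\cdot\sigma(a)=\rho(ae_{\alpha})-\rho(e_{\alpha}a)=\rho(ae_{\alpha}-e_{\alpha}a).$$
Because $\|ae_{\alpha}-e_{\alpha}a\|\le\|ae_{\alpha}-a\|+\|a-e_{\alpha}a\|\to 0$ and $\rho$ is bounded, the left-hand side converges to $0$ in norm, hence weak$^{\ast}$. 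On the other hand, the left and right module actions of $\mathcal{A}$ on the second dual $(\mathcal{A}\widehat{\otimes}\mathcal{A})^{\ast\ast}$ are separately weak$^{\ast}$-continuous (being adjoints of bounded operators), so $\sigma(a)\cdot\rho(e_{\alpha})\to\sigma(a)\cdot M$ and $\rho(e_{\alpha})\cdot\sigma(a)\to M\cdot\sigma(a)$ weak$^{\ast}$ along the subnet. Comparing limits yields $\sigma(a)\cdot M-M\cdot\sigma(a)=0$.

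Next I would verify that $\pi^{\ast\ast}(M)\cdot\sigma(a)=\sigma(a)$, where $\sigma(a)$ on the right is identified with $\kappa_{\mathcal{A}}(\sigma(a))\in\mathcal{A}^{\ast\ast}$. Since $\pi^{\ast\ast}$ is weak$^{\ast}$-continuous and $\pi^{\ast\ast}\circ\rho=\kappa_{\mathcal{A}}\circ\sigma$, we have $\pi^{\ast\ast}(M)=\lim_{\alpha}\kappa_{\mathcal{A}}(\sigma(e_{\alpha}))$ in the weak$^{\ast}$ topology. Using the identity $\kappa_{\mathcal{A}}(c)\cdot b=\kappa_{\mathcal{A}}(cb)$, that $\sigma$ is a homomorphism, and weak$^{\ast}$-continuity of right multiplication by $\sigma(a)$ on $\mathcal{A}^{\ast\ast}$, we obtain
$$\pi^{\ast\ast}(M)\cdot\sigma(a)=\lim_{\alpha}\kappa_{\mathcal{A}}(\sigma(e_{\alpha})\sigma(a))=\lim_{\alpha}\kappa_{\mathcal{A}}(\sigma(e_{\alpha}a)),$$
the limits being weak$^{\ast}$. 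Finally $e_{\alpha}a\to a$ in norm and $\sigma$ is continuous, so $\kappa_{\mathcal{A}}(\sigma(e_{\alpha}a))\to\kappa_{\mathcal{A}}(\sigma(a))$ in norm, hence weak$^{\ast}$; uniqueness of weak$^{\ast}$ limits then gives $\pi^{\ast\ast}(M)\cdot\sigma(a)=\kappa_{\mathcal{A}}(\sigma(a))$. Combined with the previous paragraph, this shows $M$ is a $\sigma$-virtual diagonal. I do not expect a real obstacle: the only points needing care are the separate weak$^{\ast}$-continuity of the module actions on the bidual and the bookkeeping around the passage to a subnet, both of which are routine.
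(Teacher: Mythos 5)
Your proof is correct and follows essentially the same route as the paper: both take $M$ to be a weak$^{*}$ cluster point of the image of a bounded approximate identity under the (bidual form of the) biflatness map, and both verify the two defining properties of a $\sigma$-virtual diagonal by weak$^{*}$-continuity of the module actions and of $\pi^{**}$. The only cosmetic difference is that you work with the bimodule homomorphism $\rho:\mathcal{A}\to(\mathcal{A}\widehat{\otimes}\mathcal{A})^{**}$ from Theorem~\ref{2.5} and the commutator $\rho(ae_{\alpha}-e_{\alpha}a)\to 0$, while the paper unwinds the same computation through the predual map $\rho:(\mathcal{A}\widehat{\otimes}\mathcal{A})^{*}\to\mathcal{A}^{*}$.
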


\begin{proof}
Let $ (e_{\alpha}) $ be a bounded approximate identity for $ \mathcal{A} $. Since $ \mathcal{A} $ is $ \sigma $-biflat there exists a bounded
linear map $ \rho:(\mathcal{A}\widehat{\otimes}\mathcal{A})^{\ast}\rightarrow\mathcal{A}^{\ast} $ satisfying $ (1) $ such that
 $ \rho\circ\pi^{\ast}=\sigma^{\ast} $. We may suppose that $ \rho^{\ast}(e_{\alpha}) $ converges in the  weak*  topology to
 an element of $(\mathcal{A}\widehat{\otimes}\mathcal{A})^{\ast\ast}$, say $ M $. Then for each $ a\in\mathcal{A} $
 and $ \lambda\in(\mathcal{A}\widehat{\otimes}\mathcal{A})^{\ast} $, we have
 \begin{align*}
\langle \sigma(a)\cdot M,\lambda\rangle &=\langle
M,\lambda\cdot\sigma(a)\rangle= w^{\ast}-\lim _{\alpha}\langle
\rho^{\ast} (e_{\alpha}),\lambda\cdot\sigma(a)\rangle\\& =\lim
_{\alpha}\langle e_{\alpha},\rho(\lambda\cdot\sigma(a))\rangle=\lim
_{\alpha}\langle e_{\alpha},\rho(\lambda)\cdot a\rangle\\&=
\lim_{\alpha}\langle ae_{\alpha},\rho(\lambda)\rangle=\langle
a,\rho(\lambda)\rangle
 \end{align*}
 and similarly $ \langle M\cdot\sigma(a),\lambda\rangle=\langle a,\rho(\lambda)\rangle $. Thus $ \sigma(a)\cdot M=M\cdot\sigma(a) $.
An application of Theorem \ref{2.5} shows that
 \begin{align*}
\pi^{\ast\ast}_{\mathcal{A}}(M)\cdot\sigma(a)
&=w^{\ast}-\lim_{\alpha}\pi^{\ast\ast}_{\mathcal{A}}(\rho^{\ast}(e_{\alpha}))\cdot\sigma(a)=w^{\ast}-
\lim_{\alpha}\kappa_{\mathcal{A}}\circ\sigma(e_{\alpha})\cdot\sigma(a)=\sigma(a),
 \end{align*}
hence $ M $ is a $ \sigma $-virtual diagonal for $ \mathcal{A} $.
\end{proof}

\begin{prop}\label{2.8}
Let $ \mathcal{A} $ be a Banach algebra and  $ \sigma\in
Hom(\mathcal{A}) $. If $ \mathcal{A} $ has a $ \sigma $-virtual
diagonal, then $ \mathcal{A} $ is $ \sigma $-biflat.
\end{prop}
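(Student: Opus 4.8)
The plan is to produce, directly out of a $\sigma$-virtual diagonal, the bimodule morphism required by the characterization in Theorem \ref{2.5}(ii). Let $M\in(\mathcal{A}\widehat{\otimes}\mathcal{A})^{\ast\ast}$ be a $\sigma$-virtual diagonal, so that $\sigma(a)\cdot M=M\cdot\sigma(a)$ and $\pi^{\ast\ast}(M)\cdot\sigma(a)=\sigma(a)$ for every $a\in\mathcal{A}$. I would define $\rho:\mathcal{A}\rightarrow(\mathcal{A}\widehat{\otimes}\mathcal{A})^{\ast\ast}$ by $\rho(a)=\sigma(a)\cdot M\ (=M\cdot\sigma(a))$. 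This is clearly a bounded linear map with $\|\rho\|\le\|\sigma\|\,\|M\|$, and once one checks that $\rho$ is a $\sigma$-$\mathcal{A}$-bimodule homomorphism and that $\pi^{\ast\ast}\circ\rho=\kappa_{\mathcal{A}}\circ\sigma$, Theorem \ref{2.5} yields that $\mathcal{A}$ is $\sigma$-biflat.

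First I would verify the module identities. Since $\sigma$ is a homomorphism and the bidual module action is associative over $\mathcal{A}$, for $a,b\in\mathcal{A}$ one has $\rho(ab)=\sigma(a)\sigma(b)\cdot M=\sigma(a)\cdot\big(\sigma(b)\cdot M\big)=\sigma(a)\cdot\rho(b)$; and, using the equal form $\rho(a)=M\cdot\sigma(a)$ instead, $\rho(ba)=M\cdot\sigma(b)\sigma(a)=\big(M\cdot\sigma(b)\big)\cdot\sigma(a)=\rho(b)\cdot\sigma(a)$. Hence $\rho$ is a $\sigma$-$\mathcal{A}$-bimodule homomorphism into $(\mathcal{A}\widehat{\otimes}\mathcal{A})^{\ast\ast}$.

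Next I would check the factorization through $\pi^{\ast\ast}$. Recall that $\pi$ is an $\mathcal{A}$-bimodule homomorphism, hence so are $\pi^{\ast}$ and $\pi^{\ast\ast}$. Therefore $\pi^{\ast\ast}(\rho(a))=\pi^{\ast\ast}\big(\sigma(a)\cdot M\big)=\sigma(a)\cdot\pi^{\ast\ast}(M)$. Applying $\pi^{\ast\ast}$ to the centrality relation $\sigma(a)\cdot M=M\cdot\sigma(a)$ gives $\sigma(a)\cdot\pi^{\ast\ast}(M)=\pi^{\ast\ast}(M)\cdot\sigma(a)$, and the latter equals $\kappa_{\mathcal{A}}(\sigma(a))=\kappa_{\mathcal{A}}\circ\sigma(a)$ by the second defining property of $M$. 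Thus $\pi^{\ast\ast}\circ\rho=\kappa_{\mathcal{A}}\circ\sigma$, and the proof is complete by Theorem \ref{2.5}.

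There is no serious obstacle here; the only point needing care is that the $\sigma$-virtual diagonal condition supplies only the one-sided identity $\pi^{\ast\ast}(M)\cdot\sigma(a)=\sigma(a)$, so to match the image $\pi^{\ast\ast}(\sigma(a)\cdot M)=\sigma(a)\cdot\pi^{\ast\ast}(M)$ one must first trade the left action for the right action via the centrality relation $\sigma(a)\cdot M=M\cdot\sigma(a)$. Alternatively, one could bypass Theorem \ref{2.5} and work with the dual map: define $\theta:(\mathcal{A}\widehat{\otimes}\mathcal{A})^{\ast}\rightarrow\mathcal{A}^{\ast}$ by $\langle\theta(\lambda),a\rangle=\langle M,\lambda\cdot\sigma(a)\rangle$ (this is the restriction of $\rho^{\ast}$), and verify $(1)$ together with $\theta\circ\pi^{\ast}=\sigma^{\ast}$ from the same two properties of $M$; I would expect the bidual formulation above to be the cleaner write-up.
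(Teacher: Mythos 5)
Your proposal is correct and follows essentially the same route as the paper: define $\rho(a)=\sigma(a)\cdot M$ and verify via Theorem \ref{2.5}(ii) that $\rho$ is a $\sigma$-$\mathcal{A}$-bimodule homomorphism with $\pi^{\ast\ast}\circ\rho=\kappa_{\mathcal{A}}\circ\sigma$. Your explicit remark that the centrality relation $\sigma(a)\cdot M=M\cdot\sigma(a)$ is needed to pass from $\sigma(a)\cdot\pi^{\ast\ast}(M)$ to $\pi^{\ast\ast}(M)\cdot\sigma(a)$ makes precise a step the paper performs silently.
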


\begin{proof}
Let $ M\in(\mathcal{A}\hat{\otimes}\mathcal{A})^{\ast\ast} $ be a $ \sigma $-virtual diagonal. Define
 $ \rho:\mathcal{A}\rightarrow(\mathcal{A}\widehat{\otimes}\mathcal{A})^{\ast\ast} $ by $ \rho(a)=\sigma(a)\cdot M $ \
 $ (a\in\mathcal{A}) $. Clearly $ \rho $ is bounded, linear and $ \sigma $-$ \mathcal{A} $-bimodule homomorphism. Also
 \begin{equation*}
\pi^{\ast\ast}\circ\rho(a)=\pi^{\ast\ast}(\sigma(a)\cdot M)=\pi^{\ast\ast}(M)\cdot\sigma(a)=\sigma(a)=\kappa_{\mathcal{A}}(\sigma(a)).
 \end{equation*}
Thus $ \mathcal{A} $ is $ \sigma $-biflat.
\end{proof}

Now, we describe the relation between $ \sigma $-biflatness and $
\sigma $-amenability.
\begin{prop}\label{2.9}
Let $ \mathcal{A} $ be a $ \sigma $-biflat Banach algebra with a
bounded approximate identity. If $ \sigma\in Hom(\mathcal{A}) $ has
a dense range, then $ \mathcal{A} $ is amenable so is $ \sigma
$-amenable.
\end{prop}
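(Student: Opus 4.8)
The plan is to use Proposition~\ref{2.7} to produce a $\sigma$-virtual diagonal and then exploit the density of $\sigma(\mathcal{A})$ to turn it into a genuine virtual diagonal. Since $\mathcal{A}$ has a bounded approximate identity and is $\sigma$-biflat, Proposition~\ref{2.7} yields an element $M\in(\mathcal{A}\widehat{\otimes}\mathcal{A})^{**}$ with
\[
\sigma(a)\cdot M=M\cdot\sigma(a)\quad\text{and}\quad\pi^{**}(M)\cdot\sigma(a)=\sigma(a)\qquad(a\in\mathcal{A}).
\]
I would then consider the two bounded linear maps $b\mapsto b\cdot M-M\cdot b$, from $\mathcal{A}$ to $(\mathcal{A}\widehat{\otimes}\mathcal{A})^{**}$, and $b\mapsto\pi^{**}(M)\cdot b-\kappa_{\mathcal{A}}(b)$, from $\mathcal{A}$ to $\mathcal{A}^{**}$; boundedness of the bimodule actions and of $\pi^{**}$ makes both of them continuous, and the identities above say exactly that they vanish on $\sigma(\mathcal{A})$. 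As $\sigma$ has dense range, $\sigma(\mathcal{A})$ is dense in $\mathcal{A}$, so both maps vanish identically; that is, $b\cdot M=M\cdot b$ and $\pi^{**}(M)\cdot b=b$ for all $b\in\mathcal{A}$ (and then $b\cdot\pi^{**}(M)=\pi^{**}(b\cdot M)=\pi^{**}(M\cdot b)=b$ as well). Hence $M$ is an ordinary virtual diagonal for $\mathcal{A}$, so $\mathcal{A}$ is amenable by \cite[Theorem~2.9.65]{D}.

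It then remains to see that amenability forces $\sigma$-amenability, and for this the density of the range is not needed. Given a Banach $\mathcal{A}$-bimodule $X$, I would keep the same Banach space but twist the module actions: set $a\bullet x=\sigma(a)\cdot x$ and $x\bullet a=x\cdot\sigma(a)$. Because $\sigma$ is a homomorphism this is again a Banach $\mathcal{A}$-bimodule, and its dual module is $X^{*}$ equipped with the actions $\lambda\mapsto\sigma(a)\cdot\lambda$ and $\lambda\mapsto\lambda\cdot\sigma(a)$. A $\sigma$-derivation $D:\mathcal{A}\to X^{*}$ is then an ordinary derivation of $\mathcal{A}$ into this dual bimodule, so amenability makes it inner: $D(a)=\sigma(a)\cdot\lambda-\lambda\cdot\sigma(a)$ for some $\lambda\in X^{*}$, which is precisely the statement that $D$ is $\sigma$-inner. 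Thus $\mathcal{A}$ is $\sigma$-amenable.

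I expect the only delicate point to be the upgrade from a $\sigma$-virtual diagonal to an honest one: one must be sure that each of the maps $b\mapsto b\cdot M$, $b\mapsto M\cdot b$ and $b\mapsto\pi^{**}(M)\cdot b$ is norm-continuous, so that agreement on the dense subset $\sigma(\mathcal{A})$ propagates to all of $\mathcal{A}$. This is immediate from the boundedness of the bimodule multiplications on $(\mathcal{A}\widehat{\otimes}\mathcal{A})^{**}$ and on $\mathcal{A}^{**}$ together with the boundedness of $\pi^{**}$; everything else is routine bookkeeping.
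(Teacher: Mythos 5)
Your proof is correct and follows essentially the same route as the paper's: Proposition~\ref{2.7} produces a $\sigma$-virtual diagonal, the density of $\sigma(\mathcal{A})$ together with the boundedness of the module actions upgrades it to an ordinary diagonal, and Johnson's theorem then gives amenability. The only cosmetic differences are that you argue directly with the virtual diagonal by continuity instead of passing to a $\sigma$-approximate diagonal and running the paper's $\varepsilon$-estimate, and that you prove (via the $\sigma$-twisted bimodule) rather than cite the implication from amenability to $\sigma$-amenability, which the paper takes from the literature.
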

\begin{proof}
Let $ \mathcal{A} $ be a $ \sigma $-biflat Banach algebra. By Proposition \ref{2.7} $ \mathcal{A} $ has a $ \sigma $-virtual
 diagonal, equivalently, $ \mathcal{A} $ has a $ \sigma $-approximate diagonal $ m_{\alpha}\subseteq(\mathcal{A}\hat{\otimes}\mathcal{A}) $.
  We show that $ \mathcal{A} $ has an approximate diagonal, so it is amenable by \cite[Theorem 2.9.65]{D} and thus by \cite[Corollay 2.2]{M}
   it is $ \sigma $-amenable, as required. Suppose that $ a\in\mathcal{A} $ and $ \varepsilon>0 $. There exists $ N $ such that for every
    $ n\geq N $, $ \lVert a-\sigma(a_{n})\rVert<\varepsilon $ then $ \lVert a-\sigma(a_{N})\rVert<\varepsilon $.
     There is $ \alpha_{0} $ such that for all $ \alpha\geq\alpha_{0} $, $ \lVert\sigma(a_{N})\cdot m_{\alpha}-m_{\alpha}\cdot\sigma(a_{N})\rVert<\varepsilon $.
     Hence for $ \alpha\geq\alpha_{0} $, we have $ \lVert a\cdot
m_{\alpha}-m_{\alpha}\cdot a\rVert\leq\lVert a\cdot
m_{\alpha}-\sigma(a_{N})\cdot m_{\alpha}\rVert+\lVert\sigma(a_{N})
\cdot m_{\alpha}-m_{\alpha}\cdot\sigma(a_{N})\rVert+\lVert
m_{\alpha}\cdot\sigma(a_{N})-m_{\alpha}\cdot
a\rVert\leq(2M+1)\varepsilon $,
 where $ M $ is a bounded of $ (m_{\alpha}) $. This shows that $ a\cdot m_{\alpha}-m_{\alpha}\cdot a\rightarrow 0 $ and
 similarly, $ a\pi(m_{\alpha})\rightarrow a $.
\end{proof}
\begin{rem}\label{2.10}
    There is a converse for Proposition \ref{2.9}. Indeed if $ \mathcal{A} $ is $ \sigma $-amenable with a bounded approximate identity,
    then it has $ \sigma $-virtual diagonal \cite[Theorem 2.2]{G}, and whence by Proposition \ref{2.8}, $ \mathcal{A} $ is $ \sigma $-biflat.
\end{rem}
We conclude the current section with two examples of $ \sigma
$-biflat Banach algebras which are not $ \sigma $-amenable. We refer
the reader to \cite[Definition 3.1.8 and Definition C.1.1]{R} for
the definitions of \textit{property} ($ \mathbb{A} $) and
\textit{approximation property} for Banach spaces. We also write $
\mathfrak{A}(E) $ for the space of \textit{approximable operators}
on a Banach space $E$.

  \begin{example}
    let $ E $ be a Banach space with property ($ \mathbb{A} $) such that $ E^{**} $ does not have
    the bounded approximation property. Then $ \mathfrak{A}(E^{*}) $ is biflat
    while it is not amenable \cite[Theorem 4.3.24]{R}. Therefore, $ \mathfrak{A}(E^{*}) $ is $ \sigma
    $-biflat for each $ \sigma\in Hom(\mathfrak{A}(E^{*})) $. On the
    other hand, One may check that every $ \sigma $-amenable Banach
    algebra for which  $ \sigma $ has a dense range, is amenable. Hence,
    choosing a homomorphism $ \sigma\in Hom(\mathfrak{A}(E^{*})) $ with a dense
    range, it is readily seen that $ \mathfrak{A}(E^{*}) $ is not $ \sigma $-amenable.
 \end{example}

\begin{example}
    let $ H $ be an infinite dimensional Hilbert space. It was shown
    in \cite[Example 4.3.25]{R} that $ \mathfrak{A}( H \widehat{\otimes}
    H)$ is biflat but not amenable. Then, an argument similar to Example
    3.1 shows that $ \mathfrak{A}( H \widehat{\otimes}
    H)$ is  $ \sigma
    $-biflat which is not $ \sigma $-amenable, whereas $ \sigma$ is a homomorphism in  $ Hom(\mathfrak{A}( H \widehat{\otimes}
    H)) $ with a dense range.
 \end{example}

\section{ The role of sequences }
We start with the following which is similar to Lemma 2.2.
\begin{lemm} Let $ \mathcal{A} $ be a Banach algebra, $X$ and $Y$ be Banach $ \mathcal{A} $-bimodule and let $ \sigma\in Hom(\mathcal{A})
$. If $T : X \longrightarrow Y$ is a bounded linear map satisfying
$T(\sigma(a)\cdot x) =  a\cdot Tx$ and $T(  x\cdot \sigma(a)) =
Tx\cdot a$ ($a \in \mathcal{A}, x \in X$), then $T^*$ is a
$\sigma$-$ \mathcal{A} $-bimodule homomorphism.
\end{lemm}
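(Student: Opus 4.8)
The plan is to compute $T^*$ directly on dual elements and verify the two $\sigma$-bimodule identities, exactly mirroring the proof of Lemma \ref{2.2} but with the roles of the ``twisted'' and ``untwisted'' sides interchanged. Concretely, fix $a \in \mathcal{A}$, $x \in X$ and $\lambda \in Y^*$, and expand $\langle T^*(a \cdot \lambda), x\rangle$ by moving $a$ across the pairing onto $x$, then using the hypothesis $T(\sigma(a)\cdot x) = a \cdot Tx$ to replace $T$ applied to a twisted argument, and finally moving $\sigma(a)$ back out. This should yield $\langle T^*(a\cdot\lambda), x\rangle = \langle \sigma(a)\cdot T^*(\lambda), x\rangle$, i.e.\ $T^*(a\cdot\lambda) = \sigma(a)\cdot T^*(\lambda)$, which is precisely the left $\sigma$-$\mathcal{A}$-bimodule homomorphism condition for $T^* : Y^* \to X^*$.

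First I would write out the chain of equalities for the left-module identity:
\begin{align*}
\langle T^*(a\cdot\lambda), x\rangle &= \langle a\cdot\lambda, Tx\rangle = \langle \lambda, (Tx)\cdot a\rangle \\
&= \langle \lambda, T(x\cdot\sigma(a))\rangle = \langle T^*(\lambda), x\cdot\sigma(a)\rangle = \langle \sigma(a)\cdot T^*(\lambda), x\rangle,
\end{align*}
where the third equality uses the hypothesis $T(x\cdot\sigma(a)) = (Tx)\cdot a$. Then I would repeat the symmetric computation for the right-module identity, starting from $\langle T^*(\lambda\cdot a), x\rangle$, using $T(\sigma(a)\cdot x) = a\cdot Tx$, to obtain $T^*(\lambda\cdot a) = T^*(\lambda)\cdot\sigma(a)$. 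Boundedness and linearity of $T^*$ are automatic since $T$ is bounded and linear, so nothing further is needed there.

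There is essentially no obstacle here: the statement is the ``formal adjoint'' of Lemma \ref{2.2}, and the only point requiring minor care is bookkeeping of which module action carries the $\sigma$ twist on each side of the adjoint — in Lemma \ref{2.2} the twist sits on the domain side and disappears on the range side, whereas here the hypothesis has the twist on the domain side of $T$ and the conclusion places it on the range side of $T^*$. Once one keeps straight that passing to the adjoint swaps left and right actions only through the pairing $\langle x\cdot a, \lambda\rangle = \langle x, a\cdot\lambda\rangle$ (and its mirror), the two displayed computations close the proof immediately.
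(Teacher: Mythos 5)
Your proof is correct and is exactly the dualization of the computation in Lemma \ref{2.2}, which is all the paper itself intends (it states Lemma 4.1 without proof, noting only that it is "similar to Lemma 2.2"). The chain of pairings and the placement of the $\sigma$-twist both match the paper's conventions, so nothing further is needed.
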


 For a Banach algebra $ \mathcal{A} $, we consider the short exact sequence
$$ \Sigma : 0 \longrightarrow ker \pi \stackrel \imath
\longrightarrow \mathcal{A} \widehat{\otimes}\mathcal{A} \stackrel
\pi \longrightarrow \mathcal{A} \longrightarrow 0 \ ,$$ and its
duals  $ \Sigma^*$ and $ \Sigma^{**}$, where $\imath$ is the natural
injection. It is known that a Banach algebra $ \mathcal{A}$ with a
bounded approximate identity is amenable if and only if $ \Sigma^*$
(and so $ \Sigma^{**}$) split \cite{C-L}.

\begin{prop} Let $ \mathcal{A} $ be a Banach algebra with a
bounded approximate identity and $ \sigma\in Hom(\mathcal{A}) $.
Suppose that $ \mathcal{A} $ is $ \sigma $-amenable. Then we have
the following statements:

$(i)$ Regarding the sequence $ \Sigma^*$, there is a bounded linear
map $ \theta : (\mathcal{A} \widehat{\otimes}\mathcal{A})^*
\longrightarrow \mathcal{A}^*$ such that $ \theta \pi^* = \sigma^*$,
and $$ \theta( \sigma(a)\cdot f ) = a\cdot \theta(f)  \ \ \
\text{and} \ \ \ \theta( f\cdot \sigma(a)) = \theta(f)\cdot a  \ \ \
( a \in \mathcal{A}, f \in (\mathcal{A}
\widehat{\otimes}\mathcal{A})^*) \ .
$$

$(ii)$ Regarding the sequence $ \Sigma^{**}$, there is a $ \sigma
$-$ \mathcal{A} $-bimodule homomorphism $ \gamma : \mathcal{A}^{**}
\longrightarrow (\mathcal{A} \widehat{\otimes}\mathcal{A})^{**}$
such that $ \pi^{**} \gamma = \sigma^{**}$.
\end{prop}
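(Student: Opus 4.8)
The plan is to reduce part $(i)$ to the machinery already developed: since $\mathcal{A}$ is $\sigma$-amenable and has a bounded approximate identity, Remark \ref{2.10} (via \cite[Theorem 2.2]{G} and Proposition \ref{2.8}) tells us that $\mathcal{A}$ is $\sigma$-biflat. By Definition \ref{2.3} this directly yields a bounded linear map $\theta : (\mathcal{A}\widehat{\otimes}\mathcal{A})^* \longrightarrow \mathcal{A}^*$ satisfying $(1)$ together with $\theta\pi^* = \sigma^*$, which is exactly the content of $(i)$. So $(i)$ requires essentially no new work beyond invoking the chain $\sigma$-amenable $\Rightarrow$ $\sigma$-virtual diagonal $\Rightarrow$ $\sigma$-biflat.

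For part $(ii)$, I would take the adjoint picture. Starting from the map $\theta$ produced in $(i)$, one wants to dualize the sequence $\Sigma^*$ once more; the natural candidate for $\gamma : \mathcal{A}^{**} \longrightarrow (\mathcal{A}\widehat{\otimes}\mathcal{A})^{**}$ is $\theta^*$ itself (or a suitable corestriction of it). First I would check that $\pi^{**}\gamma = \sigma^{**}$: this is just $\pi^{**}\theta^* = (\theta\pi^*)^* = (\sigma^*)^* = \sigma^{**}$, using that taking adjoints is contravariantly functorial and that $\kappa$-maps intertwine correctly. Then I would verify that $\theta^*$ is a $\sigma$-$\mathcal{A}$-bimodule homomorphism: since $\theta$ satisfies the ``reversed'' intertwining relations $\theta(\sigma(a)\cdot f) = a\cdot\theta(f)$ and $\theta(f\cdot\sigma(a)) = \theta(f)\cdot a$, Lemma \ref{2.2} (or rather its analogue, the first Lemma of Section 4, stated precisely for maps satisfying these reversed relations) applies and gives that $\theta^*$ is a genuine $\sigma$-$\mathcal{A}$-bimodule homomorphism from $\mathcal{A}^{**}$ to $(\mathcal{A}\widehat{\otimes}\mathcal{A})^{**}$.

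The one point that needs care is that $\theta^*$ a priori maps $\mathcal{A}^{**}$ into $((\mathcal{A}\widehat{\otimes}\mathcal{A})^*)^*$, which is $(\mathcal{A}\widehat{\otimes}\mathcal{A})^{**}$ — so in fact there is no codomain mismatch, and $\gamma = \theta^*$ works verbatim. The only genuinely substantive ingredient is thus the unstarred Lemma of Section 4, which converts ``$T$ satisfies the reversed module relations'' into ``$T^*$ is a $\sigma$-bimodule homomorphism''; once that is in hand, $(ii)$ is a two-line adjoint computation. I expect the main (minor) obstacle to be bookkeeping: making sure the identification $((\mathcal{A}\widehat{\otimes}\mathcal{A})^*)^* = (\mathcal{A}\widehat{\otimes}\mathcal{A})^{**}$ is used consistently and that the canonical embeddings $\kappa_{\mathcal{A}}$, $\kappa_{\mathcal{A}\widehat{\otimes}\mathcal{A}}$ are tracked correctly when asserting $\pi^{**}\gamma = \sigma^{**}$ rather than some twisted variant. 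No essentially new estimate or construction beyond what Sections 2–3 provide is needed.
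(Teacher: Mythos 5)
Your argument is correct, and for part $(ii)$ it coincides with the paper's: the paper also sets $\gamma=\theta^{*}$ and invokes Lemma 4.1, exactly as you do. For part $(i)$ your route is a legitimate shortcut rather than a genuinely new argument: you observe that statement $(i)$ is word-for-word the definition of $\sigma$-biflatness, which is already guaranteed by the chain $\sigma$-amenable with bounded approximate identity $\Rightarrow$ $\sigma$-virtual diagonal $\Rightarrow$ $\sigma$-biflat (Remark \ref{2.10} together with Proposition \ref{2.8}). The paper instead re-derives this from scratch: it takes a $\sigma$-virtual diagonal $M$ and defines $\theta$ explicitly by $\langle a,\theta(f)\rangle:=\langle f\cdot\sigma(a),M\rangle$, then checks $\theta\pi^{*}=\sigma^{*}$ and the reversed module relations by direct computation. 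If you unwind your chain (Proposition \ref{2.8} builds $\rho(a)=\sigma(a)\cdot M$ and Theorem \ref{2.5} passes to a restriction of $\rho^{*}$), you recover precisely the paper's formula for $\theta$, so the two constructions produce the same map; what your version buys is economy, what the paper's version buys is an explicit formula for $\theta$ in terms of $M$ that makes the verification self-contained. Your codomain bookkeeping for $\theta^{*}:\mathcal{A}^{**}\to(\mathcal{A}\widehat{\otimes}\mathcal{A})^{**}$ and the identity $\pi^{**}\theta^{*}=(\theta\pi^{*})^{*}=\sigma^{**}$ are both correct, so no gap remains.
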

\begin{proof} $(i)$ Let $M \in (\mathcal{A}
\widehat{\otimes}\mathcal{A})^{**}$ be a $ \sigma $-virtual diagonal
for $ \mathcal{A} $. We define the map $ \theta : (\mathcal{A}
\widehat{\otimes}\mathcal{A})^* \longrightarrow \mathcal{A}^*$ via
$$ \langle a ,  \theta(f)  \rangle := \langle  f\cdot \sigma(a) , M
\rangle \ \ \ (a \in \mathcal{A}, f \in (\mathcal{A}
\widehat{\otimes}\mathcal{A})^*) \ .
$$ Then, for $a \in \mathcal{A}$ and $\lambda \in \mathcal{A}^*$
\begin{align*}
\langle a ,  \theta \pi^* \lambda \rangle &= \langle (\pi^*
\lambda)\cdot \sigma(a), M \rangle = \langle \pi^*( \lambda\cdot
\sigma(a)) , M \rangle = \langle  \lambda\cdot \sigma(a) , \pi^{**}
M \rangle
\\&= \langle  \lambda ,  \sigma(a) \pi^{**} M \rangle
= \langle  \lambda ,  \sigma(a)  \rangle = \langle  a , \sigma^*
\lambda \rangle
\end{align*}
so that $ \theta \pi^* = \sigma^*$. Next for $a , b \in \mathcal{A}$
and $f \in (\mathcal{A} \widehat{\otimes}\mathcal{A})^*$, we have
\begin{align*}
\langle b , \theta(\sigma(a) \ . \ f)  \rangle &= \langle (\sigma(a)
\ . \ f) \ . \  \sigma(b) , M \rangle = \langle  f \ . \  \sigma(b)
, M  \ . \ \sigma(a) \rangle \\&= \langle  f \ . \  \sigma(b) ,
\sigma(a) \ . \ M  \rangle = \langle  f \ . \  \sigma(ba) ,
 M  \rangle \\&= \langle ba , \theta(f) \rangle = \langle  b , a \ . \ \theta(f)
 \rangle
\end{align*}
whence $ \theta( \sigma(a) \ . \ f ) = a \ . \ \theta(f) $. The
equality $ \theta( f \ . \ \sigma(a)) = \theta(f) \ . \ a$ is even
easier.

$(ii)$ Take $ \gamma := \theta^*$, where $\theta$ is given by the
clause $(i)$. Then, it is immediate by Lemma 4.1.
\end{proof}

Finally, we generalize \cite[Proposition 4.3.23]{R} where the proof
reads somehow the same lines.
\begin{theo}
    Let $ \mathcal{A} $ be Banach algebra, let $ \mathcal{B} $ be a closed subalgebra of $ \mathcal{A} $ and
     $ \sigma\in Hom(\mathcal{A}) $ for which $ \sigma(\mathcal{B}) \subseteq \mathcal{B}
    $ with the followings properties:
    \begin{enumerate}
        \item[(i)] $ \mathcal{B} $ is $ \sigma $-amenable;
        \item[(ii) ] $ \mathcal{B} $ is a left ideal of $ \mathcal{A} $;
        \item[(iii)] $ \mathcal{B} $ has a bounded approximate identity which is also a bounded left approximate identity for $ \mathcal{A} $.
    \end{enumerate}
    Then $ \mathcal{A} $  is $ \sigma $-biflat.
 \end{theo}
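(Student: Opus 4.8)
The plan is to produce a bounded $\sigma$-$\mathcal A$-bimodule homomorphism $\rho:\mathcal A\to(\mathcal A\widehat\otimes\mathcal A)^{**}$ with $\pi_{\mathcal A}^{**}\circ\rho=\kappa_{\mathcal A}\circ\sigma$, and then invoke Theorem \ref{2.5}. The raw material is a $\sigma$-virtual diagonal for $\mathcal B$: since $\mathcal B$ is $\sigma$-amenable with a bounded approximate identity, by Remark \ref{2.10} (equivalently \cite[Theorem 2.2]{G}) it possesses a $\sigma$-virtual diagonal $N\in(\mathcal B\widehat\otimes\mathcal B)^{**}$, i.e. $\sigma(b)\cdot N=N\cdot\sigma(b)$ and $\pi_{\mathcal B}^{**}(N)\cdot\sigma(b)=\sigma(b)$ for $b\in\mathcal B$. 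Because $\mathcal B$ is a left ideal in $\mathcal A$, the map $\mathcal B\widehat\otimes\mathcal B\to\mathcal A\widehat\otimes\mathcal A$ is well behaved and $\mathcal B\widehat\otimes\mathcal B$ carries a natural left $\mathcal A$-action ($a\cdot(b\otimes c)=ab\otimes c$, legitimate since $ab\in\mathcal B$); I will push $N$ forward to $(\mathcal A\widehat\otimes\mathcal A)^{**}$, still calling it $N$, and note that the left $\mathcal A$-module structure survives on the bidual.

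The candidate is $\rho(a):=\sigma(a)\cdot N$ for $a\in\mathcal A$, mirroring Proposition \ref{2.8}. First I would check $\rho$ is a $\sigma$-$\mathcal A$-bimodule homomorphism. The left identity $\rho(ab)=\sigma(ab)\cdot N=\sigma(a)\cdot\rho(b)$ is immediate from associativity of the left action. For the right identity one needs $\rho(ba)=\rho(b)\cdot\sigma(a)$, i.e. $\sigma(ba)\cdot N=(\sigma(b)\cdot N)\cdot\sigma(a)$; here I use that $\sigma(b)\in\sigma(\mathcal B)\subseteq\mathcal B$ and the centrality relation $\sigma(b')\cdot N=N\cdot\sigma(b')$ inherited from $\mathcal B$, combined with the left-ideal structure, to move $\sigma(a)$ across. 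The second task is the identity $\pi_{\mathcal A}^{**}\circ\rho=\kappa_{\mathcal A}\circ\sigma$: compute $\pi_{\mathcal A}^{**}(\sigma(a)\cdot N)=\sigma(a)\cdot\pi_{\mathcal A}^{**}(N)$, and since $\pi_{\mathcal A}$ restricted to $\mathcal B\widehat\otimes\mathcal B$ agrees with $\pi_{\mathcal B}$, we get $\pi_{\mathcal A}^{**}(N)=\pi_{\mathcal B}^{**}(N)\in\mathcal B^{**}$; then I want $\sigma(a)\cdot\pi_{\mathcal B}^{**}(N)=\sigma(a)$ for all $a\in\mathcal A$. This is where the bounded approximate identity of $\mathcal B$ that is also a bounded \emph{left} approximate identity for $\mathcal A$ enters: writing $(e_\alpha)$ for it, $\sigma(a)=\lim_\alpha \sigma(a)e_\alpha$ (weak$^*$, say), and $\sigma(a)\cdot\pi_{\mathcal B}^{**}(N)=\lim_\alpha \sigma(a)e_\alpha\cdot\pi_{\mathcal B}^{**}(N)=\lim_\alpha\sigma(a)\cdot\bigl(e_\alpha\cdot\pi_{\mathcal B}^{**}(N)\bigr)$, and $e_\alpha\cdot\pi_{\mathcal B}^{**}(N)$ is controlled by the $\sigma$-virtual diagonal relation for $\mathcal B$ evaluated along the approximate identity — one shows $\pi_{\mathcal B}^{**}(N)$ acts as a left identity on $\sigma(\mathcal B)$ and then passes to all of $\mathcal A$ via $(e_\alpha)$.

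I expect the main obstacle to be precisely this last weak$^*$-limit manipulation: the relation $\pi_{\mathcal B}^{**}(N)\cdot\sigma(b)=\sigma(b)$ is a \emph{right}-hand identity on $\sigma(\mathcal B)$, whereas what I need is a \emph{left} action of $\sigma(a)$ on $\pi_{\mathcal B}^{**}(N)$ behaving as the identity; reconciling the two requires the centrality $\sigma(b)\cdot N=N\cdot\sigma(b)$ (hence $\sigma(b)\cdot\pi_{\mathcal B}^{**}(N)=\pi_{\mathcal B}^{**}(N)\cdot\sigma(b)=\sigma(b)$ after applying $\pi_{\mathcal B}^{**}$, using that $\pi_{\mathcal B}^{**}$ is a bimodule map) together with a careful interchange of the net $(e_\alpha)$ with weak$^*$-continuity of the module actions on the bidual. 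A secondary subtlety is justifying that the push-forward of $N$ along $(\mathcal B\widehat\otimes\mathcal B)^{**}\to(\mathcal A\widehat\otimes\mathcal A)^{**}$ is compatible with the diagonal maps and the $\mathcal A$-actions; this is routine functoriality once one observes $\sigma(\mathcal B)\subseteq\mathcal B$ and that $\mathcal B$ is a left ideal, so I would dispatch it quickly and spend the bulk of the argument on the identity $\pi_{\mathcal A}^{**}\circ\rho=\kappa_{\mathcal A}\circ\sigma$. Finally, boundedness and linearity of $\rho$ are clear from $\|\rho(a)\|\le\|\sigma\|\,\|a\|\,\|N\|$, so Theorem \ref{2.5}(ii)$\Rightarrow$(i) closes the proof.
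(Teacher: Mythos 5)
Your overall strategy --- manufacture a $\sigma$-$\mathcal{A}$-bimodule map $\rho:\mathcal{A}\to(\mathcal{A}\widehat{\otimes}\mathcal{A})^{**}$ out of a $\sigma$-virtual diagonal of $\mathcal{B}$ and close with Theorem \ref{2.5} --- is exactly the paper's, but your candidate $\rho(a)=\sigma(a)\cdot N$ has the factor $\sigma(a)$ on the wrong side relative to the hypotheses, and this breaks the two steps you yourself flag as delicate. First, the right-module identity: you need $\sigma(x)\sigma(a)\cdot N=\sigma(x)\cdot N\cdot\sigma(a)$ for \emph{all} $x,a\in\mathcal{A}$, i.e.\ you must commute $\sigma(a)$ past $N$ for arbitrary $a\in\mathcal{A}$. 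The centrality relation $\sigma(b)\cdot N=N\cdot\sigma(b)$ holds only for $b\in\mathcal{B}$, and neither the left-ideal property ($\mathcal{A}\mathcal{B}\subseteq\mathcal{B}$, which controls products of the form $ab$ with $b\in\mathcal{B}$, not $ba$) nor Cohen factorization $\mathcal{A}=\mathcal{B}\mathcal{A}$ lets you "move $\sigma(a)$ across": writing $x=cy$ with $c\in\mathcal{B}$ traps $\sigma(ya)$ between $\sigma(c)$ and $N$, and $cya\in\mathcal{B}\mathcal{A}$ need not lie in $\mathcal{B}$. Second, for $\pi_{\mathcal{A}}^{**}\circ\rho=\kappa_{\mathcal{A}}\circ\sigma$ you need $\sigma(a)\cdot\pi^{**}(N)=\sigma(a)$, i.e.\ $\pi^{**}(N)$ acting as a \emph{right} identity on $\sigma(\mathcal{A})$, and your argument for this uses $\sigma(a)e_\alpha\to\sigma(a)$. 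But hypothesis (iii) only gives a bounded \emph{left} approximate identity for $\mathcal{A}$, so $e_\alpha a\to a$ while $ae_\alpha\to a$ can fail for $a\notin\mathcal{B}$. Both failures have the same source: every structural hypothesis (left ideal, left b.a.i.) is left-handed, so the only identity you can hope to verify for free is the right-module one, which forces $\sigma(a)$ to sit on the right of the diagonal-like term.

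The paper's proof makes precisely this correction: it sets $\bar{\rho}(a)=w^{*}\text{-}\lim_\alpha\tilde{\rho}(e_\alpha)\cdot\sigma(a)$, where $\tilde{\rho}$ is the pushforward to $(\mathcal{A}\widehat{\otimes}\mathcal{A})^{**}$ of the splitting map $\rho:\mathcal{B}^{**}\to(\mathcal{B}\widehat{\otimes}\mathcal{B})^{**}$ coming from $\sigma$-amenability of $\mathcal{B}$ (morally $N\cdot\sigma(e_\alpha)$ in your notation). With $\sigma(a)$ on the right, the right $\sigma$-$\mathcal{A}$-module identity is immediate, the equation $\pi_{\mathcal{A}}^{**}(\bar{\rho}(a))=\lim_\alpha\kappa_{\mathcal{A}}(\sigma(e_\alpha a))=\kappa_{\mathcal{A}}(\sigma(a))$ uses exactly the left b.a.i., and the genuinely hard step becomes the \emph{left}-module identity, which is proved by first checking $\bar{\rho}(axb)=\sigma(a)\cdot\bar{\rho}(x)\cdot\sigma(b)$ for $b\in\mathcal{B}$ (where $axb\in\mathcal{A}\mathcal{B}\subseteq\mathcal{B}$ lets one use the $\mathcal{B}$-bimodule property) and then removing the restriction on $b$ via two applications of Cohen's factorization theorem. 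If you flip your definition to $\rho(a)=N\cdot\sigma(a)$ and substitute $N\cdot\sigma(e_\alpha)$ for $N$ you will be reconstructing that argument; as written, the proposal does not go through.
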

 \begin{proof}
    Since $ \mathcal{B} $ is $ \sigma $-amenable, Proposition 4.2(ii) yields
    the existence of a $ \sigma $-$ \mathcal{B} $-bimodule homomorphism $
\rho:\mathcal{B}^{**}\longrightarrow(\mathcal{B}\hat{\otimes}\mathcal{B})^{**}
$ such that $ \pi^{**}_{\mathcal{B}} \circ
\rho=(\sigma\mid_{\mathcal{B}})^{**}$. Set  $
\tilde{\rho}=(\iota\hat{\otimes}\iota)^{**} \circ \rho $, where $
\iota:\mathcal{B}\hookrightarrow\mathcal{A} $ is the canonical
embedding. Let $ (e_{\alpha})_{\alpha} $ be a bounded approximate
identity for $ \mathcal{B} $ which is a
    left bounded approximate identity for $ \mathcal{A} $. For each
    $a \in \mathcal{A}$, $ (
    \tilde{\rho}(e_{\alpha})\cdot\sigma(a))_\alpha$ is a bounded net in $(\mathcal{A}\hat{\otimes}\mathcal{A})^{**}
    $, and so without loss of generality we may suppose that it is
    convergence. Therefore we obtain a bounded linear map $ \bar{\rho}:\mathcal{A}\longrightarrow(\mathcal{A}\hat{\otimes}\mathcal{A})^{**}
    $, defined by $ \bar{\rho}(a)=w^{*}-\lim_\alpha \tilde{\rho}(e_{\alpha})\cdot\sigma(a)
    $. It is immediate that $\bar{\rho}$ is a right $ \sigma
    $-$\mathcal{A}$-module homomorphism. To check that $\bar{\rho}$
    is a left $ \sigma
    $-$\mathcal{A}$-module homomorphism, we first notice that
    \begin{align*}
    \bar{\rho}(axb)&=w^{*}-\lim_\alpha\tilde{\rho}(e_{\alpha})\cdot\sigma(axb)\\
    &=w^{*}-\lim_\alpha\sigma(axb)\cdot\tilde{\rho}(e_{\alpha}) \ \ \ \   (\text{since} \  axb \in\mathcal{B}) \\
    &=w^{*}-\lim_\alpha \sigma(a) \sigma(xb)\cdot\tilde{\rho}(e_{\alpha}) \\
    &=w^{*}-\lim_\alpha\sigma(a)\cdot\tilde{\rho}(e_{\alpha})\cdot\sigma(xb) \ \ \ \   (\text{since} \  xb \in\mathcal{B}) \\
    &=\sigma(a)\cdot\bar{\rho}(x)\cdot\sigma(b) \ \ \  \ (a,x\in\mathcal{A}, \ b\in\mathcal{B}
    ).
    \end{align*}
    Let $a,x,b\in\mathcal{A}$. By Cohen's factorization theorem, $  b=cd $ for some $ c\in\mathcal{B}$ and $ d\in\mathcal{A}
    $. Then, it follows from the above observation that
    \begin{align*}
    \bar{\rho}(axb)&=\bar{\rho}(axcd)=\bar{\rho}(axc)\cdot\sigma(d)=\sigma(a)\cdot\bar{\rho}(x)\cdot\sigma(c)\cdot\sigma(d)\\
    &=\sigma(a)\cdot\bar{\rho}(x)\cdot\sigma(cd)=\sigma(a)\cdot\bar{\rho}(x)\cdot\sigma(b).
    \end{align*}
    Now, let $ a,x\in\mathcal{A} $. Again, by Cohen's  factorization theorem, there are $ y,z \in\mathcal{A} $ such that $ b=yz $.
    Then
    $$
    \bar{\rho}(ax)=\bar{\rho}(ayz)=\sigma(a)\cdot\bar{\rho}(y)\cdot\sigma(z)=\sigma(a)\cdot\bar{\rho}(yz)=\sigma(a)\cdot\bar{\rho}(x).
    $$
    So $ \bar{\rho} $ is $ \sigma $-$ \mathcal{A} $-bimodule homomorphism.
    It remains to prove that $ \pi^{**}_{\mathcal{A}}\circ\bar{\rho}=\kappa_{\mathcal{A}}\circ\sigma $. For each $ a\in\mathcal{A} $ we have
    \begin{align*}
    \pi^{**}_{\mathcal{A}} (\bar{\rho}(a))&=w^{*}-\lim_\alpha \pi^{**}_{\mathcal{A}}(\tilde{\rho}(e_{\alpha})\cdot\sigma(a))
    =w^{*}-\lim_\alpha \pi^{**}_{\mathcal{A}}(\tilde{\rho}(e_{\alpha}))
    \ . \ \sigma(a) \\
    &=w^{*}-\lim_\alpha \pi^{**}_{\mathcal{B}}(\rho(e_{\alpha}))
    \ . \ \sigma(a) =\lim_\alpha  \sigma^{**}(e_{\alpha}) \ . \ \sigma(a) =
    \lim_\alpha \kappa_{\mathcal{A}} ( \sigma(e_{\alpha} a)) \\&=
    \kappa_{\mathcal{A}} (\sigma(a)) .
    \end{align*}
    So by Theorem \ref{2.5}, $ \mathcal{A} $ is $ \sigma $-biflat.
 \end{proof}

\section{ Application for Tensor products}

Let $ \mathcal{A} $ be a Banach algebra, let $ \sigma\in Hom(\mathcal{A}) $ and let $ C>0 $. We say that $ \mathcal{A} $ is $ C $-$ \sigma $-biflat,
 If there exists a map $ \rho:(\mathcal{A}\otimes\mathcal{A})^{\ast}\longrightarrow\mathcal{A}^{\ast} $, satisfying $ (1) $
 such that $ \rho\circ\pi^{\ast}=\sigma^{\ast} $ and $ \lVert \rho\rVert< C $.

 Let $ \mathcal{A} $ and $ \mathcal{B} $ be Banach algebras and
  $ \sigma_{\mathcal{A}}\in Hom(\mathcal{A}) $ and $ \sigma_{\mathcal{B}}\in Hom(\mathcal{B}) $.
   Let $ E $ be a Banach $ \mathcal{A} $-bimodule, and let $ F $ be a  Banach $ \mathcal{B} $-bimodule.
   We regard $ E\widehat{\otimes}F $ as a Banach $ \mathcal{A}\widehat{\otimes}\mathcal{B} $-bimodule with the actions
 \begin{equation*}
  \begin{array}{rl}
(a\otimes b)\cdot(x\otimes y)=(a\cdot x)\otimes(b\cdot y)\\
(x\otimes y)\cdot(a\otimes b)=(x\cdot a)\otimes(y\cdot b),
  \end{array}
 \end{equation*}
for every $ a\in\mathcal{A},b\in\mathcal{B},x\in E $ and $ y\in F $.

Using Ramsden's notation \cite{P}, we construct $
\tilde{\mathcal{B}}(E,F) $ and $ \hat{\mathcal{B}}(F,E) $ as
follows.

 Let $ \tilde{\mathcal{B}}(E,F) $ be the space $ \mathcal{B}(E,F) $ with the following module actions:
  $$
   \begin{array}{rl}
((a\otimes b)\cdot T)(x)=\sigma_{\mathcal{B}}(b)\cdot T(x\cdot\sigma_{\mathcal{A}}(a))\\
(T\cdot(a\otimes b))(x)=T(\sigma_{\mathcal{A}}(a)\cdot x)\cdot\sigma_{\mathcal{B}}(b),
    \end{array}
  $$
for every $ T\in\mathcal{B}(E,F)$, $a\in\mathcal{A}$, $b\in\mathcal{B}$, $x\in E$. Consider the map
 $ \tilde{T}: (E\widehat{\otimes}F)^{\ast}\rightarrow{\tilde{\mathcal{B}}}(E,F^{*});
   \lambda\rightarrow\tilde{T}(\lambda) $ defined by $ \tilde{T}(\lambda)(x)(y)=\lambda(x\otimes y) $.
   By \cite[42. Proposition 13]{B}, $ \tilde{T} $ is an isometric isomorphism of Banach spaces.
   We claim that $ \tilde{T} $ satisfies (1). Indeed
\begin{align*}
\tilde{T}(\sigma_{\mathcal{A}\otimes\mathcal{B}}(a\otimes b)\cdot\lambda)(x)(y)&=\tilde{T}((\sigma_{\mathcal{A}}(a)\otimes\sigma_{\mathcal{B}}(b))
\cdot\lambda)(x)(y)\\&=((\sigma_{\mathcal{A}}(a)\otimes\sigma_{\mathcal{B}}(b))\cdot\lambda)(x\otimes y)\\&=\lambda((x\otimes y)
\cdot(\sigma_{\mathcal{A}}(a)\otimes\sigma_{\mathcal{B}}(b)))\\&=\lambda(x\cdot\sigma_{\mathcal{A}}(a)\otimes y\cdot\sigma_{\mathcal{B}}(b)),
\end{align*}
 and on the other hand
\begin{align*}
(a\otimes b)\cdot\tilde{T}(\lambda)(x)(y)&=\sigma_{\mathcal{B}}(b)\cdot\tilde{T}(\lambda)(x\cdot
\sigma_{\mathcal{A}}(a))(y)\\&=\sigma_{\mathcal{B}}(b)\cdot\lambda(x\cdot\sigma_{\mathcal{A}}(a)\otimes y)\\&=
\lambda(x\cdot\sigma_{\mathcal{A}}(a)\otimes y)\cdot\sigma_{B}(b)\\&=\lambda(x\cdot\sigma_{\mathcal{A}}(a)\otimes y\cdot\sigma_{\mathcal{B}}(b)).
\end{align*}

 Let $ \widehat{\mathcal{B}}(F,E) $ be $ \mathcal{B}(F,E) $ with the following module actions
\begin{equation*}
 \begin{array}{rl}
((a\otimes b)\cdot T)(y)=\sigma_{\mathcal{A}}(a)\cdot T(y\cdot\sigma_{\mathcal{B}}(b))\\
(T\cdot(a\otimes b))(y)=T(\sigma_{\mathcal{B}}(b)\cdot y)\cdot\sigma_{\mathcal{A}}(a),
 \end{array}
\end{equation*}
 for all $ T\in\mathcal{B}(F,E)$, $a\in A$, $b\in B$, $y\in F$. Consider the map $ \widehat{T}
 :(E\widehat{\otimes}F)^{\ast}\rightarrow{\hat{\mathcal{B}}}(F,E^{\ast});\lambda\mapsto\widehat{T}(\lambda) $ defined by $ \widehat{T}(\lambda)(y)(x)=\lambda(x\otimes y) $. A similar argument as we use for $ \tilde{T} $, shows that $ \hat{T} $ is an isometric isomorphism of Banach spaces satisfying (1)

 Now, we are ready to prove the main goal of the current section.

\begin{theo}
 Let $ \mathcal{A} $ be $ C_{1} $-$ \sigma_{\mathcal{A}} $-biflat Banach algebra and let $ \mathcal{B} $ be
  $ C_{2} $-$ \sigma_{\mathcal{B}} $-biflat Banach algebra and  if both $ \sigma_{\mathcal{A}} $ and $ \sigma_{\mathcal{B}} $
  are idempotents. Then $ (\mathcal{A}\widehat{\otimes}\mathcal{B}) $ is $ C_{1}  C_{2} $-$ \sigma_{\mathcal{A}}\otimes\sigma_{\mathcal{B}} $-biflat.
\end{theo}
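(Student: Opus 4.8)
The plan is to adapt Ramsden's argument \cite[Proposition 2.5]{P}, carrying the two homomorphisms through the construction; write $\mathcal{C}=\mathcal{A}\widehat{\otimes}\mathcal{B}$ and $\sigma=\sigma_{\mathcal{A}}\otimes\sigma_{\mathcal{B}}\in Hom(\mathcal{C})$. The starting point is the ``unscrambling'' map: let $\Psi:(\mathcal{A}\widehat{\otimes}\mathcal{A})\widehat{\otimes}(\mathcal{B}\widehat{\otimes}\mathcal{B})\longrightarrow\mathcal{C}\widehat{\otimes}\mathcal{C}$ be the isometric isomorphism determined on elementary tensors by $(a_{1}\otimes a_{2})\otimes(b_{1}\otimes b_{2})\mapsto(a_{1}\otimes b_{1})\otimes(a_{2}\otimes b_{2})$. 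A routine verification shows that $\Psi$ is a $\mathcal{C}$-bimodule isomorphism, for the evident $\mathcal{C}$-bimodule structure on the domain, and that $\pi_{\mathcal{C}}\circ\Psi=\pi_{\mathcal{A}}\otimes\pi_{\mathcal{B}}$, where $\pi_{\mathcal{A}}\otimes\pi_{\mathcal{B}}$ sends $(a_{1}\otimes a_{2})\otimes(b_{1}\otimes b_{2})$ to $a_{1}a_{2}\otimes b_{1}b_{2}$. Dualising, $\Psi^{*}$ is an isometric isomorphism and $\Psi^{*}\circ\pi_{\mathcal{C}}^{*}=(\pi_{\mathcal{A}}\otimes\pi_{\mathcal{B}})^{*}$.

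Next I would assemble $\rho:(\mathcal{C}\widehat{\otimes}\mathcal{C})^{*}\longrightarrow\mathcal{C}^{*}$ as a composition. Let $\rho_{\mathcal{A}}:(\mathcal{A}\widehat{\otimes}\mathcal{A})^{*}\to\mathcal{A}^{*}$ and $\rho_{\mathcal{B}}:(\mathcal{B}\widehat{\otimes}\mathcal{B})^{*}\to\mathcal{B}^{*}$ realise the $C_{1}$-$\sigma_{\mathcal{A}}$-biflatness and $C_{2}$-$\sigma_{\mathcal{B}}$-biflatness, so each satisfies $(1)$, $\rho_{\mathcal{A}}\circ\pi_{\mathcal{A}}^{*}=\sigma_{\mathcal{A}}^{*}$, $\rho_{\mathcal{B}}\circ\pi_{\mathcal{B}}^{*}=\sigma_{\mathcal{B}}^{*}$, $\|\rho_{\mathcal{A}}\|<C_{1}$ and $\|\rho_{\mathcal{B}}\|<C_{2}$. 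Using the isometric isomorphism $\tilde{T}$ introduced above with $E=\mathcal{A}\widehat{\otimes}\mathcal{A}$, $F=\mathcal{B}\widehat{\otimes}\mathcal{B}$ (with their outer actions) identifies $((\mathcal{A}\widehat{\otimes}\mathcal{A})\widehat{\otimes}(\mathcal{B}\widehat{\otimes}\mathcal{B}))^{*}$ with $\tilde{\mathcal{B}}(\mathcal{A}\widehat{\otimes}\mathcal{A},(\mathcal{B}\widehat{\otimes}\mathcal{B})^{*})$; post-composing the operators there with $\rho_{\mathcal{B}}$ gives an element of $\mathcal{B}(\mathcal{A}\widehat{\otimes}\mathcal{A},\mathcal{B}^{*})\cong((\mathcal{A}\widehat{\otimes}\mathcal{A})\widehat{\otimes}\mathcal{B})^{*}$; applying $\hat{T}$ with $E=\mathcal{A}\widehat{\otimes}\mathcal{A}$, $F=\mathcal{B}$ rewrites this as an element of $\hat{\mathcal{B}}(\mathcal{B},(\mathcal{A}\widehat{\otimes}\mathcal{A})^{*})$; post-composing with $\rho_{\mathcal{A}}$ gives an element of $\mathcal{B}(\mathcal{B},\mathcal{A}^{*})\cong(\mathcal{B}\widehat{\otimes}\mathcal{A})^{*}$; and a final flip identifies this isometrically with $\mathcal{C}^{*}$. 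Let $\rho$ be the composition of $\Psi^{*}$ with this chain. As $\Psi^{*}$, $\tilde{T}$, $\hat{T}$ and the flips are isometric, $\|\rho\|\leq\|\rho_{\mathcal{A}}\|\,\|\rho_{\mathcal{B}}\|<C_{1}C_{2}$.

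I would then verify $\rho\circ\pi_{\mathcal{C}}^{*}=\sigma^{*}$. Given $\mu\in\mathcal{C}^{*}$, let $\widehat{\mu}\in\mathcal{B}(\mathcal{A},\mathcal{B}^{*})$ be the corresponding operator. By $\Psi^{*}\circ\pi_{\mathcal{C}}^{*}=(\pi_{\mathcal{A}}\otimes\pi_{\mathcal{B}})^{*}$, the image of $\pi_{\mathcal{C}}^{*}\mu$ inside $\tilde{\mathcal{B}}(\mathcal{A}\widehat{\otimes}\mathcal{A},(\mathcal{B}\widehat{\otimes}\mathcal{B})^{*})$ is $z\mapsto\pi_{\mathcal{B}}^{*}\big(\widehat{\mu}(\pi_{\mathcal{A}}(z))\big)$; post-composing with $\rho_{\mathcal{B}}$ and using $\rho_{\mathcal{B}}\circ\pi_{\mathcal{B}}^{*}=\sigma_{\mathcal{B}}^{*}$ turns it into $z\mapsto\sigma_{\mathcal{B}}^{*}\big(\widehat{\mu}(\pi_{\mathcal{A}}(z))\big)$; then, after the $\hat{T}$-step, post-composing with $\rho_{\mathcal{A}}$ and using $\rho_{\mathcal{A}}\circ\pi_{\mathcal{A}}^{*}=\sigma_{\mathcal{A}}^{*}$ returns in $\mathcal{C}^{*}$ the functional $a\otimes b\mapsto\langle\widehat{\mu}(\sigma_{\mathcal{A}}(a)),\sigma_{\mathcal{B}}(b)\rangle=\langle\mu,\sigma_{\mathcal{A}}(a)\otimes\sigma_{\mathcal{B}}(b)\rangle=\langle\sigma^{*}\mu,a\otimes b\rangle$, so $\rho\circ\pi_{\mathcal{C}}^{*}=\sigma^{*}$.

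The step I expect to be the main obstacle is checking that $\rho$ satisfies $(1)$ with respect to $\sigma$. Although $\tilde{T}$, $\hat{T}$ satisfy $(1)$ and $\rho_{\mathcal{A}}$, $\rho_{\mathcal{B}}$ satisfy $(1)$ for their own homomorphisms, the module actions on the spaces $\tilde{\mathcal{B}}(\cdot,\cdot)$ and $\hat{\mathcal{B}}(\cdot,\cdot)$ carry the homomorphisms inside them, so one must track how the factors $\sigma_{\mathcal{A}}(a)\cdot$ and $\sigma_{\mathcal{B}}(b)\cdot$ are absorbed when composing with $\rho_{\mathcal{A}}$ and $\rho_{\mathcal{B}}$: post-composition with $\rho_{\mathcal{B}}$ turns the $\tilde{\mathcal{B}}$-action into the action $((a\otimes b)\cdot S)(x)=b\cdot S(x\cdot\sigma_{\mathcal{A}}(a))$, and recognising this --- first as the $\hat{\mathcal{B}}$-structure required by $\hat{T}$, and after the next post-composition as the genuine $\mathcal{C}$-bimodule structure of $\mathcal{C}^{*}$ --- uses $\sigma_{\mathcal{A}}\circ\sigma_{\mathcal{A}}=\sigma_{\mathcal{A}}$ and $\sigma_{\mathcal{B}}\circ\sigma_{\mathcal{B}}=\sigma_{\mathcal{B}}$; this is where the idempotence hypothesis enters. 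Granting this, $\rho$ is a bounded linear map of norm $<C_{1}C_{2}$ satisfying $(1)$ with $\rho\circ\pi_{\mathcal{C}}^{*}=\sigma^{*}$, and hence $\mathcal{A}\widehat{\otimes}\mathcal{B}$ is $C_{1}C_{2}$-$(\sigma_{\mathcal{A}}\otimes\sigma_{\mathcal{B}})$-biflat.
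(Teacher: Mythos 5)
Your proposal follows essentially the same route as the paper: the same chain of identifications $((\mathcal{A}\widehat{\otimes}\mathcal{B})\widehat{\otimes}(\mathcal{A}\widehat{\otimes}\mathcal{B}))^{\ast}\cong\tilde{\mathcal{B}}(\mathcal{A}\widehat{\otimes}\mathcal{A},(\mathcal{B}\widehat{\otimes}\mathcal{B})^{\ast})\to\cdots\to(\mathcal{A}\widehat{\otimes}\mathcal{B})^{\ast}$ with post-composition by $\rho_{\mathcal{B}}$ and then $\rho_{\mathcal{A}}$, the same norm estimate, and the same tracking of $\pi^{\ast}_{\mathcal{A}\widehat{\otimes}\mathcal{B}}(\lambda)$ through the composition. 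If anything you are more explicit than the paper, which states ``all the maps satisfy $(1)$'' without comment, whereas you correctly isolate the idempotence of $\sigma_{\mathcal{A}}$ and $\sigma_{\mathcal{B}}$ as what allows the property $(1)$ to be chained through the composition.
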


\begin{proof}
 There exists a bounded linear map $ \rho_{\mathcal{A}}:(\mathcal{A}\widehat{\otimes}\mathcal{A})^{\ast}\rightarrow\mathcal{A}^{\ast} $
 such that $ \rho_{\mathcal{A}}\circ\pi^{\ast}_{\mathcal{A}}=\sigma^{\ast}_{\mathcal{A}} $, satisfying (1)
  and $ \Vert\rho_{\mathcal{A}}\Vert\leq C_{1} $ and also a bounded linear map
   $ \rho_{\mathcal{B}}:(\mathcal{B}\widehat{\otimes}\mathcal{B})^{\ast}\rightarrow\mathcal{B}^{\ast} $
   such that $ \rho_{\mathcal{B}}\circ\pi^{\ast}_{\mathcal{B}}=\sigma^{\ast}_{\mathcal{B}} $, satisfying (1)
    and $ \Vert\rho_{\mathcal{B}}\Vert\leq C_{2} $. Consider the composition
 \begin{align*}
 \rho_{\mathcal{A}\widehat{\otimes}\mathcal{B}}:
 ((\mathcal{A}\widehat{\otimes}\mathcal{B})\widehat{\otimes}(\mathcal{A}\widehat{\otimes}\mathcal{B}))
 ^{\ast}&\cong\tilde{\mathcal{B}}(\mathcal{A}\widehat{\otimes}\mathcal{A},(\mathcal{B}\widehat{\otimes}\mathcal{B})^{\ast})\\
 &\xrightarrow{T\mapsto\rho_\mathcal{B}\circ T}\tilde{\mathcal{B}}(\mathcal{A} \hat{ \otimes} \mathcal{A}, \mathcal{B}^\ast))\cong\widehat{\mathcal{B}}
 (\mathcal{B},(\mathcal{A}\widehat{\otimes}\mathcal{A})^{\ast})\\
 &\xrightarrow{T\mapsto\rho_\mathcal{A}\circ T}\widehat{\mathcal{B}}(\mathcal{B},\mathcal{A}^{*})\cong(\mathcal{A}\widehat{\otimes}\mathcal{B})^{\ast}.
 \end{align*}
All the maps satisfy the equation (1), and $ \Vert\rho_{\mathcal{A}\widehat{\otimes}\mathcal{B}}\Vert\leq C_{1}C_{2} $.
For $ \lambda\in(\mathcal{A}\widehat{\otimes}\mathcal{B})^{\ast} $, we follow
$ \pi^{\ast}_{\mathcal{A}\widehat{\otimes}\mathcal{B}}(\lambda) $ under the sequence of composition
$ \rho_{\mathcal{A}\widehat{\otimes}\mathcal{B}} $. We have
$$
  \rho_{\mathcal{B}}\circ\pi^{\ast}_{\mathcal{A}\widehat{\otimes}\mathcal{B}}(\lambda)\mapsto \rho_{\mathcal{B}}\circ\pi^{\ast}_{\mathcal{B}}\circ\tilde{T}
  _{\lambda}\circ\pi_{\mathcal{A}}\mapsto\sigma^{\ast}_{\mathcal{B}}\circ\tilde{T}_{\lambda}\circ\pi_{\mathcal{A}}\mapsto\pi^{\ast}_{\mathcal{A}}\circ\widehat{T}
  _{\lambda}\circ\sigma_{\mathcal{B}}.
 $$
So
\begin{align*}
\rho_{\mathcal{A}}\circ\rho_{\mathcal{B}}\circ\pi^{\ast}_{\mathcal{A}\widehat{\otimes}\mathcal{B}}(\lambda)\mapsto\rho_{\mathcal{A}}
\circ\pi^{\ast}_{\mathcal{A}}\circ\widehat{T}_{\lambda}\circ\sigma_{\mathcal{B}}
\mapsto\sigma^{\ast}_{\mathcal{A}}\circ\widehat{T}_{\lambda}\circ\sigma_{\mathcal{B}}
\mapsto\sigma^{\ast}_{\mathcal{A}\widehat{\otimes}\mathcal{B}}(\lambda),
\end{align*}
hence $\rho_{\mathcal{A}\widehat{\otimes}\mathcal{B}}\circ\pi^{\ast}_{\mathcal{A}\widehat{\otimes}\mathcal{B}}=
\sigma^{\ast}_{\mathcal{A}\widehat{\otimes}\mathcal{B}} $.

Note that the last equality comes from the following isomorphisms
$$
(\mathcal{A}\widehat{\otimes}\mathcal{B})^{\ast}\cong\widetilde{\mathcal{B}}(\mathcal{A},\mathcal{B}^{\ast})
\cong\widehat{\mathcal{B}}(\mathcal{B},\mathcal{A}^{\ast}).
$$
 \end{proof}
% % % % % % % % % % % % % % % % % % % % % % % % % % % % % % % % % % %
% % % % % % % % % % % % % % % % % % % % % % % % % % % % % % % % % % %
% % % % % % % % % % % % % % % % % % % % % % % % % % % % % % % % % % %

%%%%%%%%%%%


\begin{thebibliography}{99}
\setlength{\baselineskip}{.45cm}
\bibitem{B}
 F. F. Bonsall and J. Duncan, Complete Normed Algebras, Springer-Varlag, Berlin Heidelberg New York, 1973.
\bibitem{C-L}
P. C. Curtis, R. J. Loy, The structure of amenable Banach algebras,
\textit{J. London Math. Soc.}, (2) \textbf{40} (1989), 89-104.

\bibitem{D}
H. G. Dales, Banach Algebras And Automatic continuity, London
Mathematical Society Monographs 24, Clarendon Press, Oxford, 2000.

\bibitem{G}
 Z. Ghorbani and M. L. Bami, $\varphi$-amenable and  $\varphi$-biflat Banach algebras, {\it Bull. Iranian Math. Soc.} \textbf{39} (3) (2013), 507-515.


\bibitem{H}
 A. Y. Helemskii, The Homology of Banach and Topological Algebras, Dordrecht, Netherlands, Kluwer, 1989.

\bibitem{J}
 B. E. Johnson, Cohomology in Banach algebras, \textit{Mem. Amer. Math. Soc.} \textbf{127} (1972).

\bibitem{J1}
 B. E. Johnson, Approximate diagonals and Cohomology of certain annihilator Banach algebras, {\it Amer. J. Math.} \textbf{94} (1972), 685-698.

\bibitem{M1}
 M. Mirzavaziri and M. S. Moslehian, $\sigma$-derivation in Banach algebras, {\it Bull. Iranian Math. Soc.} \textbf{32} (1) (2006), 65-78.

\bibitem{M2}
 M. Mirzavaziri and M. S. Moslehian, $\sigma$-amenability of Banach algebras, {\it Southeast Asian Bull. Math.} \textbf{33} (2009), 89-99.

\bibitem{M}
 M. S. Moslehian and A. N. Motlagh, Some notes on ($\sigma,\tau$)-amenable of Banach algebras, {\it Stud. Univ. Babe\c s-Bolyai Math.}
   \textbf{53} (3) (2008), 57-68.

\bibitem{P}
 P. Ramsden, Biflatness of semigroup algebras, \textit{Semigroup Forum}, \textbf{79} (2009), 515-530.

\bibitem{R}
 V. Runde, Lectures on Amenability, Lecture Notes in Mathematics 1774, Springer-Verlag, Berlin, 2002.


\bibitem{Y}
 T. Yazdanpanah and H. Najafi, $\sigma$-contractible and  $\sigma$-biprojective Banach algebras, {\it Quaestiones Math.} \textbf{33} (2010), 485-495.














\end{thebibliography}
\end{document}